\theoremstyle{plain}
\newtheorem{theorem}{Theorem}[section]
\newtheorem{corollary}[theorem]{Corollary}
\newtheorem{lemma}[theorem]{Lemma}
\newtheorem{proposition}[theorem]{Proposition}
\theoremstyle{definition}
\theoremstyle{remark}
\newtheorem{remark}{Remark}[section]
\numberwithin{equation}{section}
\newcommand{\RR}{\mathbb{R}}
\newcommand{\tr}{\,{\rm tr}}
\newcommand{\diag}{\,{\rm diag}}
\newcommand{\sgn}{\,{\rm sgn}}
\newcommand{\ep}{\varepsilon}
\newcommand{\W}{{\rm W}}
\newcommand{\LL}{{\rm L}}
\newcommand{\na}{\nabla}
\newcommand{\kk}{k}
\newcommand{\dx}{\,{\rm d}x}
\newcommand{\bm}{\lambda}
\mathchardef\emptyset="001F
\begin{document}

\title[Narrow ribbons of nematic elastomers]
{Shape programming for narrow ribbons of \\nematic elastomers}

\author[V.~Agostiniani]{Virginia Agostiniani}
\address{SISSA, via Bonomea 265, 34136 Trieste - Italy} 
\email{vagostin@sissa.it}

\author[A.~DeSimone]{Antonio DeSimone}
\address{SISSA, via Bonomea 265, 34136 Trieste - Italy}
\email{desimone@sissa.it}

\author[K.~Koumatos]
{Konstantinos Koumatos}
\address{Gran Sasso Science Institute,
viale Francesco Crispi 7,
67100 L'Aquila - Italy}
\email{konstantinos.koumatos@gssi.infn.it}

\thanks{}

\begin{abstract} 

Using the theory of $\Gamma$-convergence, 
we derive from three-dimension-al elasticity 
new one-dimensional models for non-Euclidean elastic ribbons,
i.e. ribbons exhibiting spontaneous curvature and twist.
We apply the models to shape-selection problems for 
thin films of nematic elastomers with twist and 
splay-bend texture of the nematic director.
For the former, we discuss
the possibility of helicoid-like shapes
as an alternative to spiral ribbons.
 
\end{abstract}

\maketitle


\section{Introduction}
\label{intro}

Shape morphing systems are common in Biology.
They are used to control locomotion in 
unicellular organisms \cite{Arroyo1,Arroyo2}
and to produce controlled motions in plants 
\cite{Dawson,Fratzl,Godinho1,Godinho2,Erodium}.
Differential swelling and shrinkage processes, 
partially hindered by fibers, lead to dynamical 
conformation changes which are essential in the life of
many botanical systems \cite{Armon}.
Inspired by Nature, many attempts have been reported in the recent
literature to synthesize artificial shape-morphing systems
based on synthetic soft materials \cite{Studart,Reyssat,Kim}
and the interest in these phenomena is steadily growing. 

A useful tool has emerged in the mathematical literature
to describe the mechanics of shape programming,
namely, \emph{non-Euclidean} structures
(non-Euclidean plates and rods).
These are elastic structures described by functionals
which are minimised when exhibiting
nonzero curvature. 
The relevant energy functionals are often postulated on the
basis of physical intuition \cite{Kle_Efr_Sha_2007}, 
but in more recent attempts they are derived from 
three dimensional models
\cite{Schmidt2007,Lew_Pak2011,Ag_De_bend},
through rigorous dimension reduction
based on the theory of $\Gamma$-convergence,
following the approach pioneered
in \cite{F_J_M_2002}.

Liquid crystal elastomers provide an interesting model system
for the study of shape programming. They are polymeric materials 
that respond to external stimuli 
(temperature, light, electric fields)
by changing shape 
\cite{Warner_book,ferro,AgDe2,AgDalDe2,Ag_Bl_Ko}
and are typically manufactured
as thin films 
\cite{Bla_Ter_War,DeSim_Dolz,Aha_Sha_Kup,CCDD1,CDD2}.
Suitable textures of the nematic director
imprinted at fabrication lead to thin structures 
with tunable and controllable spontaneous curvature,
see \cite{hybrid,Sawa2011,Urayama2013}.
In particular, for the \emph{twist} geometry
(nematic director always parallel to the
mid-plane of the film and rotating by $\pi/2$ from
the bottom to the top surface of the film)
it has been observed 
both experimentally and computationally
\cite{Sawa2011,Teresi_Varano} that, 
depending on the aspect ratio of the mid-plane,
either spiral ribbons 
(this is the case of large width over length aspect ratio)
or helicoid-like shapes
(this is the case of small width over length aspect ratio)
emerge spontaneously.  

In this paper, we provide a rigorous mathematical description
of thin structures of nematic elastomers
(in the case of twist and splay-bend geometry)
where the minimisers of the deduced energy functionals
reproduce the experimentally observed minimum energy configurations.
Our analysis stems from the combination of two main results:
first, we use the $3$D-to-$2$D dimension reduction result
in \cite{Ag_De_bend} for (non-Euclidean) thin films
of nematic elastomers, and, secondly, 
we use a non-Euclidean version of the $2$D-to-$1$D dimension reduction 
result of \cite{Freddi2015}, 
where a corrected version of the
well-known Sadowsky functional 
is derived for the mechanical description of 
inextensible elastic ribbons.
The reader is also referred to \cite{Kirby2014,Efrati2014}
and all other papers in the same
special issue of the Journal of Elasticity for more
material on the mechanics of elastic ribbons.

Our results show that the technique of rigorous dimensional
reduction based on $\Gamma$-convergence,
far from being just a mathematical exercise,
can provide a tool to \emph{derive},
rather than \emph{postulate},
the functional form and the material parameters
(elastic constants, spontaneous curvature and twist, etc...)
for dimensionally reduced models
of thin structures.
We concentrate our discussion on liquid
crystal elastomers, but clearly our method
is applicable to more general systems,
whenever differential spontaneous distortions 
in the cross section induce spontaneous curvature and twist
of the mid-line of the rod.

The starting point of the subsequent analysis is 
a family of non-Euclidean plate models defined on a narrow strip
of width $\ep$ cut out from the plane and forming an angle 
$\theta$ with the horizontal axis (see \eqref{physical} below). 
In Section \ref{theory} we set-up our 2D model and show
that minor modifications of the results of \cite{Freddi2015} 
allow us to derive in the limit as $\ep\downarrow0$
the 1D model defined in \eqref{limit_J_theta}--\eqref{Qasmin}.
Again in Section \ref{theory}, we observe that examples of
our starting 2D theory are given by twist and splay-bend nematic
elastomer sheets, as obtained from
3D nonlinear elasticity in \cite{Ag_De_bend}.  
The limiting 1D theory, which is a non-Euclidean rod theory,
is then explicitly computed for twist and splay-bend 
nematic elastomers in Section \ref{twist} and 
\ref{splay-bend}, respectively.
In particular, the explicit expression of the limiting 
energy densities associated with the rods through 
their flexural strains around the width axis and the 
torsional strains, and the corresponding minimisers, 
are given in Proposition
\ref{prop_T} and Lemma \ref{minimi_T} (in the twist case)
and in Proposition \ref{prop:QS} and Lemma \ref{minimi_S}
(in the splay-bend case).

\section{A non-Euclidean Sadowsky functional}
\label{theory}

Let $\omega$ be an open planar domain of $\RR^2$.
In the framework of a nonlinear plate theory \cite{F_J_M_2002},
we consider the bending energy
\begin{equation}
\label{eq:bendingenergy}
\int_{\omega}\Big\{c_1|A_{\hat v}(\hat z) - \bar{A}|^2
+
c_2\tr^2(A_{\hat v}(\hat z)-\bar A)
+\bar e\Big\}{\rm d}\hat z 
\end{equation}
associated with a developable surface $\hat v(\omega)$,
$\hat v$ being a deformation from $\omega$ to $\RR^3$.
In the previous expression,
$A_{\hat v}(\cdot)\in\RR^{2\times2}_{\rm sym}$ 
denotes the second fundamental form
of $\hat v(\omega)$, 
$c_1>0$ and $c_2>0$
are material constants, and $\bar A\in\RR^{2\times2}_{\rm sym}$ 
and $\bar e$ 
represent a characteristic target curvature tensor
and a characteristic nonnegative energy constant, respectively.
Moreover, the notation $\tr^2A$ stands for the square
of the trace of $A$.
We recall that $A_{\hat v}$ can be expressed as
$(\na\hat v)^{\rm T}\na\hat\nu$, where 
$\hat\nu=\partial_{z_1}\hat v\wedge\partial_{z_2}\hat v$.
In \cite{Ag_De_bend}, the two-dimensional energy 
\eqref{eq:bendingenergy} has been rigorously derived from
a three dimensional model for thin films of nematic elastomers
with \emph{splay-bend} and \emph{twist} orientation
of the nematic directors along the thickness
and the following explicit formulas have been obtained for $\bar A$ 
\begin{equation}
\label{barA}
\bar{A}_S\,=\,\kk\diag(-1,0), 
\qquad\ 
\bar{A}_T\,=\,\kk\diag(-1,1),
\qquad\quad
\kk:=\frac{6\,\eta_0}{\pi^2h_0},
\end{equation}
and for $\bar e$
\begin{equation}
\label{bare}
\bar e_S
\,=\, 
\mu\,(1+\bm)
\left(\frac{\pi^4-12}{32}\right)
\frac{\eta_0^2}{h_0^2}
\qquad\ 
\bar e_T
\,=\,
\mu\left(\frac{\pi^4 - 4\pi^2 - 48}{8\pi^4}\right)
\frac{\eta_0^2}{h_0^2}.
\end{equation} 
Here and throughout the paper we use the indices  
``\,$S$\,'' and ``\,$T$\,'' for the 
quantities related to the splay-bend and the twist case,
respectively.
In the previous formulas,
$\eta_0$ is a positive dimensionless parameter quantifying
the magnitude of the spontaneous strain variation 
along the (small) thickness $h_0$ of the film,
$\mu$ is the elastic shear modulus, 
and $\bm+2\mu/3$ is the bulk modulus.
Also, the material constants appearing in \eqref{eq:bendingenergy}
are given by  $c_1=\mu/12$ and $c_2=\bm\mu/12$.
We refer the reader to \cite{Ag_De_bend} for a 
detailed description of the three-dimensional model 
and of the splay-bend and twist nematic director fields.
In Sections \ref{twist} and \ref{splay-bend} we specialize
our results to the case where the curvature tensor 
$\bar A$ is of the form \eqref{barA},
while in the rest of this section we focus on 
a general energy density
of type \eqref{eq:bendingenergy}.
  
We cut out of the planar region $\omega$
a narrow strip
\begin{equation*}
S_{\ep}^{\theta}:=
\Big\{z_1\mathsf e_1^{\theta}+z_2\mathsf e_2^{\theta}:
z_1\in(-\ell/2,\ell/2),\ z_2\in(-\ep/2,\ep/2)\Big\}
\subset\omega,
\qquad0\leq\theta<\pi,
\end{equation*}
with
\[
\mathsf e_i^{\theta} = R_\theta\mathsf e_i,\quad i=1,2,
\qquad
R_\theta = \left( \begin{array}{rr} \cos\theta & -\sin\theta \\ \sin\theta & \cos\theta \end{array}\right)
\]
and consider the energy \eqref{eq:bendingenergy}
restricted to the strip $S_{\ep}^{\theta}$, namely
\begin{equation}
\label{physical}
\hat{\mathscr E}_{\ep}^{\theta}(\hat v)
\,:=\,
\int_{S_{\ep}^{\theta}}
\Big\{c_1|A_{\hat v}(\hat z) - \bar A\,|^2+
c_2\tr^2(A_{\hat v}(\hat z)-\bar A\,)+\bar e\Big\}\,{\rm d}\hat z,
\end{equation}
where $\hat v:S_{\ep}^{\theta}\to\RR^3$ is a deformation such
that $\hat v(S_{\ep}^{\theta})$ 
is a developable surface.
We are interested in examining the behaviour of the minimisers of the 
functionals $\hat{\mathscr E}_\ep^{\theta}$ 
in the limit of vanishing width, i.e. $\ep\downarrow0$.
Notice that using the function
$v:S_{\ep}\to\RR^3$ defined in the unrotated strip 
$S_{\ep}:=S_{\ep}^0$ as $v(z)=\hat v(R_\theta z)$,
we have that $v(S_{\ep})$ is developable and
$\hat{\mathscr E}_{\ep}^{\theta}(\hat v)$ can be rewritten as
\begin{align*}
\hat{\mathscr E}_{\ep}^{\theta}(\hat v)
&\,=\,
\int_{S_{\ep}}
\Big\{c_1|A_{\hat v}(R_\theta z) - \bar A\,|^2+
c_2\tr^2(A_{\hat v}(z)-\bar A\,)+\bar e\Big\}\,{\rm d}z\\
&\,=\,
\int_{S_{\ep}}
\Big\{c_1|A_v(z) - R_\theta^{\rm T}\bar{A}R_\theta|^2+
c_2\tr^2(A_v(z)-\bar A\,)+\bar e\Big\}
{\rm d}z,
\end{align*}
where in the second equality we have used the fact that
$A_{\hat v}(R_\theta z)=R_\theta A_v(z)R_\theta^{\rm T}$.
Now, introducing a suitable rescaling and setting 
\begin{equation}
\label{rescaled_energy}
\mathscr E_{\ep}^{\theta}(v)
\,:=\,\frac1{\ep}
\int_{S_{\ep}}
\Big\{c_1|\,A_v(z) - \bar A^{\theta}\,|^2+
c_2\tr^2(A_v(z)-\bar A\,)+\bar e\Big\}\,{\rm d}z,
\end{equation}
with
\begin{equation}
\label{A_theta}
\bar A^{\theta}\,:=\,R_\theta^{\rm T}\bar AR_\theta,
\end{equation}
we have that
$\hat{\mathscr E}_{\ep}^{\theta}(\hat v)
=
\ep\,
\mathscr E_{\ep}^{\theta}(v).
$
Having this identification in mind, from now on
we always deal with the functional 
$v\mapsto\mathscr E_{\ep}^{\theta}(v)$.
Expanding the integrand we obtain the following general form of the bending energy
\begin{equation}
\label{eq:bendingenergy1}
\mathscr E_{\ep}^{\theta}(v)
=
\frac1{\ep}\int_{S_\ep}\Big\{
c |A_v(z)|^2 + L^\theta(A_v(z))\Big\}\,{\rm d}z,
\end{equation}
where
\begin{equation}
\label{L_theta}
L^\theta(A_v) 
\,:=\, - 2c_1 A_v\cdot\bar{A}^\theta -2c_2\tr A_v\tr\bar{A}^\theta
+ c_1|\bar{A}^\theta|^2 + c_2\tr^2\bar{A}^\theta + \bar e,
\end{equation}
and the notation is meant to remind the reader that, 
as a function of the matrix $A$, $L^\theta$ is linear.
In the above expression, we have set $c=c_1+c_2$ and we have made use of the fact that
\[
\tr^2A = |A|^2 + 2\det A,
\]
for all $A\in\RR^{2\times2}_{\rm sym}$. 
In fact, since $v$ is an isometry of the planar strip $S_{\ep}$, 
the Gaussian curvature associated with $v(S_{\ep})$ vanishes, i.e.
\[
\det A_v(z) = 0.
\]
The natural function space for $v$ is the space of $\W^{2,2}$ isometries of $S_\ep$ defined as
\[
\W^{2,2}_{\rm iso}(S_\ep,\RR^3)\,:=\,
\Big\{v\in \W^{2,2}(S_\ep,\RR^3)\,:\, \partial_i v\cdot\partial_j v = \delta_{ij}\Big\}.
\]
In order to express the energy over the fixed domain
\[
S=I\times\Big(-\frac12,\frac12\Big),
\qquad\qquad
I := \Big(-\frac\ell2,\frac\ell2\Big),
\]
we change variables and define the rescaled version 
$y:S\to\RR^3$ of $v$, given by
\[
y(x_1,x_2) = v(x_1,\ep x_2).
\]
The following procedure is rather standard and we use the notation 
of \cite{Freddi2015} as, in the sequel, our proofs will be largely based on this paper. 
By introducing the scaled gradient
\[
\nabla_\ep \cdot = (\partial_1 \cdot |\ep^{-1}\partial_2\cdot)
\]
we obtain that $\nabla_\ep y(x_1,x_2) = \nabla v(x_1,\ep x_2)$ and $y$ belongs to the space of scaled isometries of $S$ defined as
\[
\W^{2,2}_{{\rm iso},\ep}(S,\RR^3)
\,:=\,
\Big\{y\in \W^{2,2}(S,\RR^3)\,:\, 
|\partial_1 y| = |\ep^{-1}\partial_2 y|=1,\,\,
\partial_1 y\cdot\partial_2 y = 0\mbox{ a.e. in S} 
\Big\}.
\]
Similarly, we may define the scaled unit normal to $y(S)$ by
\[
n_{y,\ep} = \partial_1 y\wedge\ep^{-1}\partial_2 y
\]
and the scaled second fundamental form associated to $y(S)$ by
\[
A_{y,\ep} =  \left( \begin{array}{rrr} n_{y,\ep}\cdot \partial_1\partial_1 y & \, & \ep^{-1}n_{y,\ep}\cdot \partial_1\partial_2 y \\  \ep^{-1}n_{y,\ep}\cdot \partial_1\partial_2 y &\, &  \ep^{-2}n_{y,\ep}\cdot \partial_2\partial_2 y \end{array}\right).
\]
With this definition, $A_{y,\ep}(x_1,x_2) = A_v(x_1,\ep x_2)$ and 
$\mathscr{E}_\ep^{\theta}(v) 
= 
\mathscr{J}_\ep^\theta(y)$, 
where the functional
\begin{equation}
\label{our_fun}
\mathscr J_\ep^\theta(y) 
:= 
\int_S \Big\{c |A_{y,\ep}|^2 + L^\theta(A_{y,\ep})\Big\}\dx
\end{equation}
is defined over the space $\W^{2,2}_{{\rm iso},\ep}(S,\RR^3)$
of scaled isometries of $S$.

\begin{lemma}[Compactness]
\label{lemma:compactness}
Suppose $(y_\ep)\subset \W^{2,2}_{{\rm iso},\ep}(S,\RR^3)$ satisfy
\[
\sup_\ep \mathscr J_\ep^\theta(y_\ep) < \infty.
\]
Then, up to a subsequence and additive constants, there exist a deformation 
$y\in \W^{2,2}(I,\RR^3)$ and a
orthonormal frame
$(d_1|d_2|d_3)\in \W^{1,2}(I,{\rm SO}(3))$ 
fulfilling
\[
d_1 = y^\prime\quad\mbox{ and }\quad d_1^\prime \cdot d_2 = 0
\quad\mbox{ a.e. in } I,
\]
and such that
\[
y_\ep\rightharpoonup y\quad\mbox{in}\quad\W^{2,2}(S,\RR^3),
\qquad\qquad\nabla_\ep y_\ep \rightharpoonup (d_1|d_2)
\quad\mbox{in}\quad\W^{1,2}(S,\RR^{3\times2}).
\]
Moreover, for some $\gamma\in \LL^2(S,\RR^3)$,
\[
A_{y,\ep}\rightharpoonup 
\left( 
\begin{array}{rr} d_1^\prime\cdot d_3 & d_2^\prime\cdot d_3  \\ d_2^\prime\cdot d_3  & \gamma  
\end{array}
\right)
\quad\mbox{in}\quad\LL^2(S,\RR^{2\times2}_{\rm sym}).
\]
\end{lemma}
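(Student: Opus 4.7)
The plan is to combine a standard energy-bound-to-$\LL^2$-bound argument with a careful exploitation of the scaled-isometry constraint, then pass to the limit step by step until all the asserted convergences and structural properties drop out. I expect the most delicate point to be the bookkeeping of the various powers of $\ep$ appearing in the Gauss--Weingarten identities, since these control the relative strength of the weak and strong convergences invoked later. A painless first step handles the linear term in \eqref{our_fun}: since $L^\theta$ is linear in $A$ with $\ep$-independent coefficients, a Young inequality yields $|L^\theta(A)| \le \frac{c}{2}|A|^2 + C$ pointwise, so the hypothesis $\sup_\ep \mathscr J_\ep^\theta(y_\ep) < \infty$ translates into a uniform bound $\|A_{y_\ep,\ep}\|_{\LL^2(S)} \le C$.

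Next I would exploit the scaled-isometry structure. For each $\ep$, the pre-rescaled deformation $v_\ep$ defined by $y_\ep(x_1,x_2) = v_\ep(x_1,\ep x_2)$ is an isometry of $S_\ep$ with developable image, so the classical Gauss--Weingarten relations $\partial_{z_i}\partial_{z_j} v_\ep = (A_{v_\ep})_{ij}\, n_{v_\ep}$ hold pointwise. Changing variables back to $x$ gives
\[
\partial_1\partial_1 y_\ep = A_{11}\,n_{y_\ep,\ep}, \qquad
\partial_1\partial_2 y_\ep = \ep\,A_{12}\,n_{y_\ep,\ep}, \qquad
\partial_2\partial_2 y_\ep = \ep^2\,A_{22}\,n_{y_\ep,\ep},
\]
with $A_{ij} = (A_{y_\ep,\ep})_{ij}$. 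Combined with the pointwise orthonormality of the columns of $(\nabla_\ep y_\ep\,|\, n_{y_\ep,\ep})$, these identities together with the $\LL^2$-bound of the previous step give uniform bounds of $y_\ep$ in $\W^{2,2}(S,\RR^3)$ (after subtracting additive constants and applying Poincaré) and of $\nabla_\ep y_\ep$ in $\W^{1,2}(S,\RR^{3\times 2})$. Passing to a subsequence I extract weak limits $y_\ep \rightharpoonup y$ and $\nabla_\ep y_\ep \rightharpoonup (d_1|d_2)$.

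To identify these limits I would use the extra factors of $\ep$ in the identities above: they force $\partial_2 y_\ep \to 0$ and $\partial_2 \nabla_\ep y_\ep \to 0$ strongly in $\LL^2$, so $y$, $d_1$ and $d_2$ depend only on $x_1 \in I$. Rellich's theorem then upgrades $\nabla_\ep y_\ep \to (d_1|d_2)$ to strong $\LL^2$-convergence, which lets the constraints $|\partial_1 y_\ep| = |\ep^{-1}\partial_2 y_\ep| = 1$ and $\partial_1 y_\ep \cdot \ep^{-1}\partial_2 y_\ep = 0$ pass to the limit; defining $d_3 := d_1 \wedge d_2$ then produces an orthonormal frame of class $\W^{1,2}(I,\mathrm{SO}(3))$. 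The identity $d_1 = y'$ follows from $\partial_1 y_\ep \rightharpoonup \partial_1 y = y'$, and the constraint $d_1' \cdot d_2 = 0$ from passing to the distributional limit in the pointwise identity $\partial_1\partial_1 y_\ep \cdot \ep^{-1}\partial_2 y_\ep = 0$, pairing the weakly convergent factor $\partial_1\partial_1 y_\ep \rightharpoonup d_1'$ in $\LL^2$ with the strongly convergent factor $\ep^{-1}\partial_2 y_\ep \to d_2$ in $\LL^2$. The same strong-times-weak argument, applied to the formulas defining $A_{y_\ep,\ep}$, identifies the $(1,1)$- and $(1,2)$-entries of the weak $\LL^2$-limit of $A_{y_\ep,\ep}$ as $d_1' \cdot d_3$ and $d_2' \cdot d_3$; the $(2,2)$-entry is merely $\LL^2$-bounded and supplies the abstract limit $\gamma$.
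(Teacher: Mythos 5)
Your proposal is correct and follows essentially the same path as the paper: the paper's own proof consists precisely of establishing the uniform $\LL^2$ bound on $A_{y_\ep,\ep}$ from the energy bound (it does so by rewriting the integrand as $c_1|A-\bar A^\theta|^2 + c_2\tr^2(A-\bar A^\theta) + \bar e \ge c_1|A-\bar A^\theta|^2$, whereas you invoke Young's inequality on the affine term $L^\theta$, an equivalent computation), and then simply cites Lemma 2.1 of Freddi--Hornung--Mora--Paroni for the remainder. The Gauss--Weingarten bookkeeping, Rellich compactness, and weak-times-strong identification of the frame and of the limiting second fundamental form that you spell out is exactly the content of the cited lemma, so you have filled in the details that the paper delegates.
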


\begin{proof}
Note that
\begin{align*}
\mathscr{J}_\ep^\theta(y_\ep) 
&= \int_S 
\Big\{c_1 |A_{y_\ep,\ep} - \bar{A}^\theta\,|^2 
+c_2 \tr^2\left(A_{y_\ep,\ep}(x) - \bar A^\theta\,\right) 
+ \bar e\Big\}\dx \\
& \geq \int_S c_1 | A_{y_\ep,\ep}(x)-\bar{A}^\theta\,|^2\dx.
\end{align*}
But, since $\bar{A}^\theta$ is constant, this implies that 
$\|A_{y_\ep,\ep}\|^2_{\LL^2(S,\RR^3)}$ is bounded uniformly in $\ep$
and the proof is identical to the proof of Lemma 2.1 
in \cite{Freddi2015}.
\end{proof}

In order to state the $\Gamma$-convergence result, we define
\begin{equation}
\label{funct_class}
\mathcal{A} \,:=\, 
\Big\{(d_1,d_2,d_3)\,:\,(d_1|d_2|d_3)\in \W^{1,2}(I,{\rm SO}(3)),\, d_1^\prime\cdot d_2 = 0\mbox{ a.e. in } I
\Big\},
\end{equation}
and the functional $\mathscr{J}^\theta:\mathcal{A}\to \RR$ by
\begin{equation}
\label{limit_J_theta}
\mathscr{J}^\theta(d_1,d_2,d_3)
\,:=\,
\int_I \overline Q^{\,\theta}(d_1^\prime\cdot d_3,d_2^\prime\cdot d_3)\dx_1,
\end{equation}
where
\begin{equation}
\label{Qasmin}
\overline Q^{\,\theta}(\alpha,\beta):=\min_{\gamma\in\RR}\left\{c |M|^2 + 2c|\det M| + L^\theta(M)\,:\,M = \left( \begin{array}{rr} \alpha & \beta  \\ \beta  & \gamma  \end{array}\right)\right\},
\end{equation}
and $L^\theta(M)$ is defined according to \eqref{L_theta}.
We recall that the constraint $d_1^\prime\cdot d_2=0$
means that the narrow strip does not bend within its plane or, equivalently,
that there is no flexure around the direction of $d_3$.

\begin{theorem}[$\Gamma$-convergence]
\label{thm:gamma}
The functionals $\mathscr J_\ep^\theta$ $\Gamma$-converge to 
$\mathscr J^\theta$ as $\ep\to0$ in the following sense:
\begin{itemize}
\item[(1)] {\rm (}$\Gamma$-$\liminf$ inequality{\rm )} 
for every sequence $(y_\ep)\subset \W^{2,2}_{{\rm iso},\ep}(S,\RR^3)$, $y\in W^{2,2}(I,\RR^3)$ and $(d_1,d_2,d_3)\in\mathcal{A}$ with $y^\prime = d_1$ a.e. in $I$, $y_\ep\rightharpoonup y$ in $\W^{2,2}(S,\RR^3)$ and $\nabla_\ep y_\ep \rightharpoonup (d_1|d_2)$ in $\W^{1,2}(S,\RR^{3\times2})$, 
we have 
\[
\liminf_{\ep\to0}\mathscr J_\ep^\theta(y_\ep)\geq 
\mathscr J^\theta(d_1,d_2,d_3);
\]
\item[(2)] {\rm (}recovery sequence{\rm )}
for every $(d_1,d_2,d_3)\in\mathcal{A}$ there exists $(y_\ep)\subset \W^{2,2}_{{\rm iso}, \ep}(S,\RR^3)$ and (up to an additive constant) $y$ satisfying $y^\prime=d_1$ 
such that $y_\ep\rightharpoonup y$ in $\W^{2,2}(S,\RR^3)$, $\nabla_\ep y_\ep\rightharpoonup (d_1|d_2)$ in $\W^{1,2}(S,\RR^{3\times2})$, and
\[
\lim_{\ep\to0} \mathscr J_\ep^\theta(y_\ep) 
= 
\mathscr J^\theta(d_1,d_2,d_3).
\]
\end{itemize}
\end{theorem}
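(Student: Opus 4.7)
The plan is to follow closely the $\Gamma$-convergence argument of \cite{Freddi2015}, which establishes the analogous statement for the purely quadratic energy $y\mapsto\int_S c|A_{y,\ep}|^2\,\dx$. The new ingredient here is the linear perturbation $L^\theta$: since $A\mapsto L^\theta(A)$ is linear, it passes to the limit continuously under the weak $\LL^2$-convergence of the scaled second fundamental forms provided by Lemma \ref{lemma:compactness}. Consequently, the analytical core is inherited from \cite{Freddi2015}, and the only new task is to carry the linear contribution along.

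For the $\Gamma$-$\liminf$, fix a sequence $(y_\ep)\subset \W^{2,2}_{{\rm iso},\ep}(S,\RR^3)$ of finite energy and let $(d_1,d_2,d_3)\in\mathcal{A}$ and $\gamma\in\LL^2(S)$ be supplied by Lemma \ref{lemma:compactness}, so that the matrix
\[
M \,=\, \left(\begin{array}{cc} d_1^\prime\cdot d_3 & d_2^\prime\cdot d_3 \\ d_2^\prime\cdot d_3 & \gamma\end{array}\right)
\]
is the weak $\LL^2$-limit of $A_{y_\ep,\ep}$. Linearity of $L^\theta$ yields $\int_S L^\theta(A_{y_\ep,\ep})\,\dx \to \int_S L^\theta(M)\,\dx$, while the lower bound of \cite{Freddi2015} (exploiting the pointwise developability $\det A_{y_\ep,\ep}=0$ together with convex lower semicontinuity applied to the two quadratic representations $|A|^2=\tr^2 A$ and $|A|^2 = (A_{11}-A_{22})^2 + 4A_{12}^2$) gives
\[
\liminf_{\ep\to0}\int_S c|A_{y_\ep,\ep}|^2\,\dx \,\geq\, \int_S \bigl(c|M|^2 + 2c|\det M|\bigr)\,\dx.
\]
Adding the two bounds and minimising pointwise over $\gamma\in\RR$, using that $\alpha:=d_1^\prime\cdot d_3$ and $\beta:=d_2^\prime\cdot d_3$ depend only on $x_1$, then collapses the $x_2$-integral and delivers $\int_I \overline Q^{\,\theta}(\alpha,\beta)\,\dx_1 = \mathscr J^\theta(d_1,d_2,d_3)$.

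For the recovery sequence, given $(d_1,d_2,d_3)\in\mathcal{A}$ define $\alpha,\beta$ as above and let $\gamma^{\ast}(x_1)$ achieve the minimum in \eqref{Qasmin}, producing a target matrix $M^{\ast}(x_1)$. I would then invoke the recovery construction of \cite{Freddi2015} with target $M^{\ast}$, producing a sequence $(y_\ep)\subset \W^{2,2}_{{\rm iso},\ep}(S,\RR^3)$ with the required weak convergences and
\[
\lim_{\ep\to0}\int_S c|A_{y_\ep,\ep}|^2\,\dx \,=\, \int_S \bigl(c|M^{\ast}|^2 + 2c|\det M^{\ast}|\bigr)\,\dx.
\]
Since $A_{y_\ep,\ep}\rightharpoonup M^{\ast}$ weakly in $\LL^2$, the linear term converges automatically to $\int_S L^\theta(M^{\ast})\,\dx$, and the two limits combine to $\mathscr J^\theta(d_1,d_2,d_3)$ by the defining property of $\gamma^{\ast}$.

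The technically delicate step is the quadratic recovery: when $\det M^{\ast}\neq 0$, no smooth developable surface can realise $M^{\ast}$ as its pointwise second fundamental form, so the approximating ribbons must be built from fine-scale oscillations between developable configurations whose weak $\LL^2$-limit is $M^{\ast}$ but whose average energy exceeds $c|M^{\ast}|^2$ by exactly $2c|\det M^{\ast}|$. This oscillation construction is the heart of \cite{Freddi2015}, and it is imported here unchanged; the linear term $L^\theta$ neither helps nor hinders, as it is insensitive to oscillations at the level of weak convergence.
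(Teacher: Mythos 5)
Your proposal is correct and takes essentially the same route as the paper: the $\liminf$ inequality is obtained by combining the Freddi--Hornung--Mora--Paroni convexity argument for the quadratic part with continuity of the linear perturbation $L^\theta$ under weak $\LL^2$-convergence, and the recovery sequence is built from the same oscillation construction (the paper isolates this in its Lemma \ref{lemma:det}, a slight variant of \cite[Lemma 3.1]{Freddi2015}, precisely to carry the affine term along). You correctly identify that $L^\theta$ is harmless in both directions because it is affine and hence passes to the limit under the weak convergences produced by the compactness lemma and by the det-free oscillating approximants.
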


In proving the existence of a recovery sequence, we will need the following lemma which is a slight variation of 
\cite[Lemma 3.1]{Freddi2015}.

\begin{lemma}
\label{lemma:det}
For every $M\in\LL^2(I,\RR^{2\times2}_{\rm sym})$ there exists a sequence 
$(M_n)\subset\LL^2(I,\RR^{2\times2}_{\rm sym})$ satisfying $\det M_n = 0$ a.e. in $I$ and for all $n\in\mathbb{N}$ such that $M_n\rightharpoonup M$ in $\LL^2(I,\RR^{2\times2}_{\rm sym})$ and
\[
\int_I 
\left[c |M_n|^2 +L^\theta(M_n) \right]{\rm d}x_1 
\longrightarrow 
\int_I \left[c |M|^2 +2c |\det M| + L^\theta(M)\right]{\rm d}x_1.
\]
\end{lemma}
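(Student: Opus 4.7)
The plan is a classical two-scale oscillation: at every point $x \in I$, decompose $M(x)$ as a convex combination $\lambda M^{(1)} + (1-\lambda)M^{(2)}$ of two symmetric rank-one matrices whose averaged squared Frobenius norm $\lambda|M^{(1)}|^2 + (1-\lambda)|M^{(2)}|^2$ exceeds $|M(x)|^2$ by exactly $2|\det M(x)|$; then build $M_n$ by oscillating between $M^{(1)}(x)$ and $M^{(2)}(x)$ at scale $1/n$. Since the affine term $L^\theta$ is automatically handled by the weak convergence $M_n \rightharpoonup M$, the real content is matching the quadratic term.

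For the pointwise decomposition, diagonalize $M = \mu_1\,e_1\otimes e_1 + \mu_2\,e_2\otimes e_2$ with a Borel-measurable choice of eigenvalues $\mu_i(x)$ and orthonormal eigenframe $(e_1(x),e_2(x))$, available for any $M \in \LL^2(I, \RR^{2\times 2}_{\rm sym})$. Off $\{M = 0\}$ set
\[
\lambda := \frac{|\mu_1|}{|\mu_1|+|\mu_2|},\quad
M^{(1)} := \sgn(\mu_1)(|\mu_1|+|\mu_2|)\,e_1\otimes e_1,\quad
M^{(2)} := \sgn(\mu_2)(|\mu_1|+|\mu_2|)\,e_2\otimes e_2,
\]
with the convention $\lambda=1/2$, $M^{(1)}=M^{(2)}=0$ on $\{M=0\}$. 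Both $M^{(i)}$ are symmetric of rank at most one, so $\det M^{(i)}=0$. A direct computation gives
\[
\lambda M^{(1)} + (1-\lambda)M^{(2)} = M,
\qquad
|M^{(1)}|^2 = |M^{(2)}|^2 = (|\mu_1|+|\mu_2|)^2 = |M|^2 + 2|\det M| \leq 2|M|^2,
\]
whence $M^{(i)} \in \LL^2(I, \RR^{2\times 2}_{\rm sym})$ and $\lambda|M^{(1)}|^2 + (1-\lambda)|M^{(2)}|^2 = |M|^2 + 2|\det M|$ a.e.

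For the oscillation step, define the fast-oscillating indicator
\[
\chi^n(x) := \mathbf{1}_{[0,\lambda(x))}(\{nx\}), \qquad
M_n := \chi^n M^{(1)} + (1-\chi^n) M^{(2)},
\]
where $\{\cdot\}$ is the fractional part. Pointwise, $M_n(x)$ equals either $M^{(1)}(x)$ or $M^{(2)}(x)$, so $\det M_n = 0$ a.e. A standard two-scale argument (first for piecewise-constant $\lambda$, then by $\LL^2$-density) yields $\chi^n \overset{\ast}{\rightharpoonup} \lambda$ in $\LL^\infty(I)$; since $M^{(i)} \in \LL^2$, this gives $M_n \rightharpoonup M$ weakly in $\LL^2$, and in particular $\int_I L^\theta(M_n)\,{\rm d}x_1 \to \int_I L^\theta(M)\,{\rm d}x_1$ by the affinity of $L^\theta$. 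Because $\chi^n$ is $\{0,1\}$-valued, the exact pointwise identity $|M_n|^2 = \chi^n|M^{(1)}|^2 + (1-\chi^n)|M^{(2)}|^2$ holds, so
\[
\int_I c|M_n|^2\,{\rm d}x_1 \longrightarrow \int_I c\bigl(\lambda|M^{(1)}|^2 + (1-\lambda)|M^{(2)}|^2\bigr)\,{\rm d}x_1 = \int_I c\bigl(|M|^2 + 2|\det M|\bigr)\,{\rm d}x_1.
\]

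The only delicate point is the Borel-measurable selection of the eigenframe, whose sole difficulty is the set where $\mu_1 = \mu_2$; there $M$ is a scalar multiple of the identity, so the identities above hold for \emph{any} orthonormal basis, and fixing the canonical basis there gives a measurable selection. The variable-coefficient two-scale claim $\chi^n \overset{\ast}{\rightharpoonup} \lambda$ is classical and poses no further obstacle.
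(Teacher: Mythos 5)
Your rank-one decomposition coincides exactly with the one in the paper: writing the paper's constant diagonal $M$ with eigenvalues $\lambda_1,\lambda_2$ and $\mu=|\lambda_1|/(|\lambda_1|+|\lambda_2|)$, its two oscillation targets $\frac{\lambda_1}{\mu}e_1\otimes e_1$ and $\frac{\lambda_2}{1-\mu}e_2\otimes e_2$ are precisely your $M^{(1)}$, $M^{(2)}$, and the computation $|M^{(i)}|^2=|M|^2+2|\det M|$ is the same. Where you differ is in the order of operations. The paper proves the statement first for \emph{constant} $M$ (so that $\lambda$, the eigenframe, and the oscillation profile are constants), extends it piecewise, approximates a general $M$ by piecewise-constant maps in $\LL^2$, and finishes by extracting a diagonal sequence after observing that $\|M_{k,n}\|_{\LL^2}^2\le 2\|M_k\|_{\LL^2}^2$. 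You instead build the oscillating sequence for general $M$ directly, with measurable $x$-dependent $\lambda(x)$ and eigenframe $e_i(x)$, and invoke a variable-coefficient Riemann--Lebesgue lemma, $\chi^n=\mathbf{1}_{[0,\lambda(\cdot))}(\{n\cdot\})\overset{\ast}{\rightharpoonup}\lambda$ in $\LL^\infty(I)$. Both strategies deliver the lemma; yours is more compact once the variable-coefficient oscillation statement is in hand, while the paper's only ever oscillates with constant profiles at the cost of a diagonal argument at the end.

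One point of caution: the justification you sketch for the variable-coefficient step --- ``first for piecewise-constant $\lambda$, then by $\LL^2$-density'' --- is not quite the right mechanism, even though the conclusion is correct. The trace $x\mapsto\mathbf{1}_{[0,\lambda(x))}(\{nx\})$ is \emph{not} an $\LL^2$-continuous functional of $\lambda$ uniformly in $n$: if $\lambda_k\to\lambda$ in $\LL^2$, the measure of $\{x:\lambda_k(x)\wedge\lambda(x)\le\{nx\}<\lambda_k(x)\vee\lambda(x)\}$ need not be controlled by $\|\lambda_k-\lambda\|_{\LL^2}$ for fixed $n$, since the level sets can correlate with the fast variable $\{nx\}$. (This is the well-known ``admissibility'' subtlety: the two-scale trace is not defined, nor continuous, on all of $\LL^2(I\times Y)$.) The correct route is a Lusin/Egorov argument: pick a compact $K\subset I$ with $|I\setminus K|<\delta$ on which $\lambda$ is continuous, extend continuously to $\tilde\lambda$ on $\overline I$, apply the Carath\'eodory two-scale lemma to $\mathbf{1}_{[0,\tilde\lambda(\cdot))}(\{n\cdot\})$, and absorb the error on $I\setminus K$ using $\|\chi^n\|_{\LL^\infty}\le1$. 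With this replacement (and your remark on measurable selection of the eigenframe, which is correct), the proof goes through.
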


\begin{proof}
Without loss of generality, we may assume that $\det M \neq 0$. We first consider the case $M$ constant and diagonal, i.e.
\[
M = \lambda_1 e_1\otimes e_1 + \lambda_2 e_2\otimes e_2.
\]
Set
\[
\mu = \frac{|\lambda_1|}{|\lambda_1| + |\lambda_2|} \in (0,1)
\]
to find that
\[
|M|^2 + 2|\det M| = \lambda_1^2+\lambda_2^2+2|\lambda_1\lambda_2| = \frac{\lambda_1^2}{\mu} +  \frac{\lambda_2^2}{1-\mu}.
\]
Denoting by $\chi:\RR\to\{0,1\}$ the 1-periodic extension of the characteristic function of the interval $(0,\mu)$ and defining $M_n:I\to\RR^{2\times2}_{\rm sym}$ by
\[
M_n(x_1) = \chi(nx_1)\frac{\lambda_1}{\mu}e_1\otimes e_1 + (1-\chi(nx_1))\frac{\lambda_2}{1-\mu}e_2\otimes e_2
\]
we infer that $\det M_n = 0$ and $M_n\overset{\ast}{\rightharpoonup}M$ in $\LL^\infty(I,\RR^{2\times2}_{\rm sym})$, since $\chi(n\cdot)\overset{\ast}{\rightharpoonup}\theta$ in $\LL^\infty(I)$. Also,
\begin{align*}
\int_I c |M_n|^2 + L^\theta(M_n) \dx_1 & = \int_I c\frac{\lambda_1^2}{\mu} +  c\frac{\lambda_2^2}{1-\mu} + L^\theta(M_n) \dx_1\\
& = \int_I  c|M|^2 + 2c |\det M| + L^\theta(M_n)  \dx_1 \\ 
&\to \int_I  c|M|^2 + 2c |\det M| + L^\theta(M)  \dx_1
\end{align*}
as $n\to\infty$, since $L^\theta$ is affine. If $M\in\RR^{2\times2}_{\rm sym}$ is not diagonal, yet constant, we can find an orthogonal matrix $R$ such that $R^{\rm T}M R$ is diagonal and apply the above argument.

If $M$ is instead piecewise constant, we apply the same argument to each interval on which $M$ is constant and, for general $M\in\LL^2(I,\RR^{2\times2}_{\rm sym})$ we approximate $M$ in the strong topology 
of $\LL^2(I,\RR^{2\times2}_{\rm sym})$ by a sequence $(M_k)$ of piecewise constant maps. We may then apply the above argument to each $M_k$ to obtain a sequence $(M_{k,n})$ with the required properties and such that
\[
||M_{k,n}||_{L^2}^2 = \int_I  |M_{k,n}|^2\dx_1 = \int_I  |M_k|^2 + 2 |\det M_k|\dx_1 \leq 2 ||M_k||_{L^2}^2,
\]
i.e. the sequence $(M_{k,n})$ is bounded in $\LL^2$. Noting that the weak topology in $\LL^2$ is metrisable on bounded sets and taking a diagonal sequence, we conclude the proof.
\end{proof}

\begin{proof}[Proof of Theorem \ref{thm:gamma}]
\quad\\
\noindent (1) ($\Gamma$-$\liminf$ inequality) Let $(y_\ep)\subset \W^{2,2}_{{\rm iso},\ep}(S,\RR^3)$, $y\in W^{2,2}(I,\RR^3)$ and $(d_1,d_2,d_3)\in\mathcal{A}$ with $y^\prime = d_1$ a.e. in $I$, $y_\ep\rightharpoonup y$ in $\W^{2,2}(S,\RR^3)$ and $\nabla_\ep y_\ep \rightharpoonup (d_1|d_2)$ in $\W^{1,2}(S,\RR^3)$. We may assume that $\liminf_\ep \mathscr{J}^\theta_\ep(y_\ep)<\infty$ as, otherwise the result follows trivially, and by passing to a subsequence that $\sup_\ep \mathscr{J}_\ep^\theta(y_\ep)<\infty$.

Lemma \ref{lemma:compactness} now states that $A_{y_\ep,\ep}\rightharpoonup A$ in $\LL^2(S,\RR^{2\times2}_{\rm sym})$ where
\[
A = \left( \begin{array}{rr} d_1^\prime\cdot d_3 & d_2^\prime\cdot d_3  \\ d_2^\prime\cdot d_3  & \gamma  \end{array}\right).
\]
Set $A^\ep:=A_{y_\ep,\ep}$ and recall that for every 2$\times$2, symmetric matrix $M$, it holds that $\tr^2M = |M|^2 + 2\det M$. Since $\det A^\ep = 0$ we infer that $|A^\ep|^2 = \tr^2A^{\ep}$. Also note that
\[
\tr^2 A^\ep = \left(A^\ep_{11} - A^\ep_{22}\right)^2 + 4 A^\ep_{11}A^\ep_{22} =  \left(A^\ep_{11} - A^\ep_{22}\right)^2 + 4\left(A^\ep_{12}\right)^2.
\]
Splitting $S = S^+ \cup S^-$ where $S^+:=\{x\in S\,:\,\det A(x)\geq 0\}$ and $S^-:=\{x\in S\,:\,\det A(x)< 0\}$, we find that
\begin{align*}
\liminf_\ep \mathscr{J}_\ep^\theta(y_\ep) & = \liminf_\ep \left\{ \int_{S^+} c\tr^2A^\ep \dx + \int_{S^-} c\left(A^\ep_{11} - A^\ep_{22}\right)^2 + 4c\left(A^\ep_{12}\right)^2 \dx\right\}\\
&\quad + \lim_\ep \int_S L^\theta(A^\ep) \dx \\
&\geq \int_{S^+} c\tr^2A \dx + \int_{S^-} c\left(A_{11} - A_{22}\right)^2 + 4c\left(A_{12}\right)^2 \dx + \int_S L^\theta(A) \dx
\end{align*}
since, with respect to the weak $\LL^2$ topology, the first two functionals are lower semicontinuous by convexity and the third functional is continuous by linearity. Next, note that
\[
\left(A_{11} - A_{22}\right)^2 + 4\left(A_{12}\right)^2 = |A|^2 - 2\det A
\]
so that
\begin{align*}
\liminf_\ep \mathscr{J}_\ep^\theta(y_\ep) 
&\geq 
c\int_{S^+}\left(\,|A|^2 +2\det A\right) \dx 
+ 
c \int_{S^-}\left(\,|A|^2 - 2\det A\right) \dx \\
&\quad + \int_S L^\theta(A) \dx\\
& = 
\int_S 
\big[c|A|^2 +2c|\det A| + L^\theta(A)\big] \dx\\
&\geq \mathscr{J}^\theta(d_1,d_2,d_3)
\end{align*}

\noindent (2) (recovery sequence) Fix $(d_1,d_2,d_3)\in\mathcal{A}$ and $y\in\W^{2,2}(I,\RR^3)$ such that $y^\prime=d_1$ a.e. in $I$. Define $R := (y^\prime|d_2|d_3)\in{\rm SO}(3)$ a.e. in $I$ and
\[
M = \left(\begin{array}{cc} y^{\prime\prime}\cdot d_3 & d_2^{\prime}\cdot d_3 \\ d_2^{\prime}\cdot d_3 & \gamma\end{array}\right)
\]
where $\gamma\in\LL^2(I)$ is chosen such that
\[
\overline Q^{\,\theta}(y^{\prime\prime}\cdot d_3,d_2^{\prime}\cdot d_3) = \overline Q^{\,\theta}(M_{11},M_{12}) = c |M|^2 +2c |\det M| + L^\theta(M).
\]
Through Lemma \ref{lemma:det} we find a sequence 
$(M_n)\subset\LL^2(I,\RR^{2\times2}_{\rm sym})$ such that 
$\det M_n = 0$, $M_n\rightharpoonup M$ in 
$\LL^2(I,\RR^{2\times2}_{\rm sym})$ and, as $n\to\infty$,
\[
\int_I 
\left[c |M_n|^2 + L^\theta(M_n)\right] \dx_1 
\longrightarrow 
\int_I 
\left[c |M|^2 +2c |\det M| + L^\theta(M)\right]\dx_1.
\]
We now proceed exactly as in \cite{Freddi2015}. Let $\lambda_n = \tr M_n$. Since $M_n$ is symmetric and $\det M_n = 0$, we may find $\beta_n(x_1)\in(-\pi/2,\pi/2]$ such that
\[
M_n = \left(\begin{array}{cc} \cos\beta_n & -\sin\beta_n \\ \sin\beta_n & \cos\beta_n \end{array}\right)\left(\begin{array}{cc} \lambda_n & 0 \\ 0 & 0 \end{array}\right)\left(\begin{array}{cc} \cos\beta_n & \sin\beta_n \\ -\sin\beta_n & \cos\beta_n \end{array}\right),
\]
which is well-defined by setting $\beta_n(x_1) = 0$ when $\lambda_n(x_1) = 0$. As in \cite{Freddi2015}, we may assume that $\lambda_n\in\LL^\infty(I)$ and, by mollifying, we find $\lambda_{n,k}\in C^\infty(\overline{I})$ and $\beta_{n,k}\in C^\infty(\overline{I})$ with the property that
\begin{itemize}
\item $|\beta_{n,k}|<\pi/2$ (note the strict inequality) for all $x_1\in\overline{I}$;
\item $\lambda_{n,k}\to\lambda_n$ in $\LL^p(I)$ as $k\to\infty$ and for all $p<\infty$;
\item $\beta_{n,k}\to\beta_n$ in $\LL^p(I)$ as $k\to\infty$ and for all $p<\infty$.
\end{itemize}
Next, set
\[
M_{n,k} = \left(\begin{array}{cc} \cos\beta_{n,k} & -\sin\beta_{n,k} \\ \sin\beta_{n,k} & \cos\beta_{n,k} \end{array}\right)\left(\begin{array}{cc} \lambda_{n,k} & 0 \\ 0 & 0 \end{array}\right)\left(\begin{array}{cc} \cos\beta_{n,k} & \sin\beta_{n,k} \\ -\sin\beta_{n,k} & \cos\beta_{n,k} \end{array}\right),
\]
and note that $\det M_{n,k} = 0$ for all $k$, $n$ and $M_{n,k}\to M_n$ in $\LL^2(I,\RR^{2\times2}_{\rm sym})$ as $k\to\infty$. Thus, by extracting a diagonal sequence, we find $\lambda^j$, $\beta^j\in C^\infty(\overline{I})$ with $|\beta^j|<\pi/2$ on $\overline{I}$ and for 
\begin{align*}
M^j & =  \left(\begin{array}{cc} \cos\beta^j & -\sin\beta^j \\ \sin\beta^j & \cos\beta^j \end{array}\right)\left(\begin{array}{cc} \lambda^j & 0 \\ 0 & 0 \end{array}\right)\left(\begin{array}{cc} \cos\beta^j & \sin\beta^j \\ -\sin\beta^j & \cos\beta^j \end{array}\right)\\
& =  \lambda^j \left(\begin{array}{cc} \cos^2\beta^j & \sin\beta^j\cos\beta^j \\ \sin\beta^j\cos\beta^j & \sin^2\beta^j \end{array}\right)
\end{align*}
it holds that $\det M^j = 0$ for all $j$, and as $j\to\infty$, $M^j\rightharpoonup M$ in $\LL^2(I,\RR^{2\times2}_{\rm sym})$ as well as
\[
\int_I
\left[ 
c |M^j|^2 + L^\theta(M^j)\right] \dx_1 
\longrightarrow 
\int_I 
\left[
c |M|^2 +2c |\det M| + L^\theta(M)\right] \dx_1.
\]
Extend $\beta^j$ smoothly to $\RR$ maintaining the constraint $|\beta^j|<\pi/2$ and for $t^j:=\pi/2 + \beta^j$, define
\[
\tilde{b}^j(\xi_1) = \cos t^j(\xi_1)e_1 + \sin t^j(\xi_1)e_2\mbox{ and } \Phi^j(\xi_1,\xi_2) = \xi_1 e_1 + \xi_2\tilde{b}^j(\xi_1),
\]
noting that there exists $\ep^j$ such that for all $\ep\leq\ep^j$ the map $(\Phi^j)^{-1}:S_\ep\to\RR^2$ is well defined 
(see \cite{Freddi2015}).

Define $R^j:I\to{\rm SO}(3)$ as the solution to the ODE
\[
\left(R^j\right)^\prime = R^j  \left(\begin{array}{ccc} 0 & 0 & -M_{11}^j \\ 0 & 0 & -M_{12}^j \\ M_{11}^j & M_{12}^j & 0\end{array}\right)
\]
with initial data $R^j(0) = R(0) = (y^\prime(0)|d_2(0)|d_3(0))$ and set
\[
d_k^j(s) = R^j(s) e_k,\,\,k=1,2,3\mbox{ and } y^j(s) = y(0) + \int_0^s d^j_1.
\]
Then $y^j\rightharpoonup y$ in $\W^{2,2}(I,\RR^3)$ and by the ODE we infer that
\begin{align*}
& (d_1^j)^\prime\cdot d_2^j = 0,\\
& (d_2^j)^\prime\cdot d_3^j = M_{12}^j = -\lambda^j\sin t^j\cos t^j, \\
& (d_1^j)^\prime\cdot d_3^j = M_{11}^j = \lambda^j\sin^2t^j.
\end{align*}
Define
\begin{align*}
b^j(\xi_1) & = \cos t^j(\xi_1) d_1^j(\xi_1) + \sin t^j(\xi_1) d_2^j(\xi_1),\\
v^j(\xi_1,\xi_2) & = y^j(\xi_1) + \xi_2 b^j(\xi_1),\\
u^j(x_1,x_2) & = v^j\left(\left(\Phi^j\right)^{-1}(x_1,x_2)\right).
\end{align*}
Following \cite{Freddi2015}, one then obtains that $(\nabla u^j)(\nabla u^j) = \mathbb{I}$, $\nabla u^j(\cdot, 0) = (d_1^j|d_2^j)$ and that $A_{u^j}(\cdot,0) = M^j(\cdot)$. For $\ep>0$ small enough, the maps $y^j_\ep: S\to\RR^3$ defined by $y^j_\ep(x_1,x_2) = u^j(x_1,\ep x_2)$ are well-defined scaled $C^2$ isometries of $S$ such that
\[
\nabla_\ep y^j_\ep \to \nabla u^j(\cdot,0) = (d_1^j|d_2^j) \mbox{ strongly in }\W^{1,2}(S,\RR^{3\times2})\mbox{ as }\ep\to0
\]
and since $A_{u^j}(x_1,0) = M^j(x_1)$, we also get that
\[
A_{y^j_\ep,\ep}\to M^j\mbox{ strongly in }\LL^2(S,\RR^{2\times2}_{\rm sym})\mbox{ as }\ep\to0.
\]
Hence, we find that
\begin{align*}
\lim_{\ep\to0} \mathscr{J}_\ep^\theta(y^j_\ep) 
& = 
\lim_{\ep\to0}\int_S 
\left[c|A_{y^j_\ep,\ep}|^2 + L^\theta(A_{y^j_\ep,\ep})\right] 
\dx\\
& = 
\int_S \left[c|M^j|^2 + L^\theta(M^j)\right]\dx \\
& \overset{j\to\infty}{\longrightarrow} 
\int_S \left[c|M|^2 + 2c|\det M| + L^\theta(M)\right]\dx\\
& = \int_S\overline Q^{\,\theta}(d_1^\prime\cdot d_3,d_2^\prime\cdot d_3)\dx_1 = \mathscr{J}^\theta(d_1,d_2,d_3).
\end{align*}
The proof can then be finished by taking diagonal sequences.
\end{proof}

\section{The twist case}
\label{twist}

In the case where the (physical) energy \eqref{physical}
is derived from a three-dimensional model 
with a twist-type nematic director field imprinted
in the thickness of an elastomeric thin film,
the characteristic quantities 
$\bar A$ and $\bar e$ present in \eqref{physical}
are given by $\bar A_T$ and $\bar e_T$
in formulas \eqref{barA} and \eqref{bare}. 
Correspondingly, the $\theta$-dependent target curvature tensor
$\bar A^\theta$ defined in \eqref{A_theta} becomes  
\begin{equation}
\label{A^T}
\bar{A}_T^\theta 
\,=\,
\kk \left( \begin{array}{rr} -\cos 2\theta & \sin2\theta \\ \sin2\theta & \cos2\theta \end{array}\right) 
\,=:\, 
\kk\left( \begin{array}{rr} -a_\theta & b_\theta \\ 
b_\theta & a_\theta \end{array}\right),
\qquad\quad\theta\in[0,\pi),
\end{equation}
and $\tr \bar A_T^\theta =\tr\bar A_T =0$.
Moreover, the functional $\mathscr E_{\ep}^{\theta}$ defined
in \eqref{eq:bendingenergy1} in this case reads
\begin{equation*}
\mathscr E_{\ep,T}^{\theta}(v)
\,:=\,
\frac1{\ep}\int_{S_\ep}\Big\{c |A_v(z)|^2 + 
L_T^{\theta}(A_v(z))\Big\}{\rm d}z,
\end{equation*}
with
\[
L_T^{\theta}(A_v) 
\,:=\, 
-2c_1\,A_v\cdot\bar A_T^{\theta} 
+ 2c_1\kk^2 + \bar e_T.
\]
Also, the energy $\mathscr J_\ep^\theta$ defined
in the rescaled configuration $S$ (see \eqref{our_fun})
is
\begin{equation}
\label{ep_funct_T}
\mathscr J_{\ep,T}^{\theta}(y) 
:= 
\int_S \Big\{c |A_{y,\ep}(x)|^2 + L_T^\theta(A_{y,\ep}(x))\Big\}\dx,
\end{equation}
for every $y\in\W^{2,2}_{{\rm iso},\ep}(S,\RR^3)$.
We recall that
$\hat{\mathscr E}_{\ep}^{\theta}(\hat v)
=
\ep\,
\mathscr E_{\ep}^{\theta}(v)
=
\ep\,
\mathscr J_{\ep}^{\theta}(y),
$
where $\hat v:S_{\ep}^{\theta}\to\RR^3$,
$v:S_{\ep}\to\RR^3$,
$v:S\to\RR^3$ are isometries and are related to each other via
the following relations
\[
v(z)=\hat v(R_\theta z),
\qquad\qquad
y(x_1,x_2)=v(x_1,\ep x_2).
\]
Lemma \ref{lemma:compactness} and Theorem \ref{thm:gamma}
apply in particular for the functionals \eqref{ep_funct_T}.
As an easy consequence of the compactness and 
the $\Gamma$-convergence results, 
via standard arguments of the theory of $\Gamma$-convergence,
the following corollary holds.
In order to state it, we define
$\mathscr{J}_T^{\theta}:\mathcal{A}\to \RR$ as
\begin{equation}
\label{funct_lim_T}
\mathscr J_T^{\theta}(d_1,d_2,d_3)
\,:=\,
\int_I 
\overline Q_T^{\,\theta}
(d_1^\prime\cdot d_3,d_2^\prime\cdot d_3)\dx_1,
\end{equation}
where $\mathcal A$ is the class of orthonormal frames defined
in \eqref{funct_class} and $\overline Q_T^{\,\theta}$
is defined as in \eqref{Qasmin} with 
$L_T^\theta$ in place of $L^\theta$ (see also above).

\begin{corollary}
\label{cor:twist}
If $(y_\ep)\subset \W^{2,2}_{{\rm iso},\ep}(S,\RR^3)$
is a sequence of minimisers of $\mathscr J_{\ep,T}^{\theta}$,
then, up to a subsequence, we have that there exist
$y\in \W^{2,2}(I,\RR^3)$ and 
a minimiser $(d_1|d_2|d_3)\in\mathcal A$ of
$\mathscr J_T^{\theta}$ with $d_1 = y^\prime$ 
such that 
\[
y_\ep\rightharpoonup y\ \mbox{ in }\ \W^{2,2}(S,\RR^3),
\qquad
\nabla_\ep y_\ep \rightharpoonup (d_1|d_2)
\ \mbox{ in }\ \W^{1,2}(S,\RR^{3\times2}),
\]
and
\begin{equation}
\label{here_gamma}
A_{y,\ep}\rightharpoonup \left( \begin{array}{rr} d_1^\prime\cdot d_3 & d_2^\prime\cdot d_3  \\ d_2^\prime\cdot d_3  & \gamma  \end{array}\right)\ \mbox{ in }\ \LL^2(S,\RR^{2\times2}_{\rm sym}),
\qquad\mbox{for some }\ \gamma\in \LL^2(S,\RR^3).
\end{equation}
Moreover,
\begin{equation}
\label{cvg_minima_T}
\min_{\W^{2,2}_{{\rm iso},\ep}(S,\RR^3)}
\mathscr J_{\ep,T}^{\theta}
\longrightarrow
\min_\mathcal A
\mathscr J_T^{\theta}.
\end{equation}
\end{corollary}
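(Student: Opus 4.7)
The statement is the standard consequence of $\Gamma$-convergence plus compactness (fundamental theorem of $\Gamma$-convergence), applied to the specialized functionals $\mathscr J_{\ep,T}^{\theta}$ and their limit $\mathscr J_T^{\theta}$. My plan is to combine Lemma \ref{lemma:compactness} with the two halves of Theorem \ref{thm:gamma}, and the only point that requires a moment of thought is the verification of the uniform energy bound needed to invoke compactness.

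First, I would exhibit a uniform bound $\sup_{\ep} \mathscr J_{\ep,T}^{\theta}(y_\ep)<\infty$. Since $(y_\ep)$ minimizes $\mathscr J_{\ep,T}^{\theta}$, it suffices to produce one competitor with energy bounded independently of $\ep$. Take the flat scaled isometry $y_\ep^0(x_1,x_2) = x_1 \mathsf e_1 + \ep x_2 \mathsf e_2$, which clearly belongs to $\W^{2,2}_{{\rm iso},\ep}(S,\RR^3)$ and for which $A_{y_\ep^0,\ep}\equiv 0$. Then
\[
\mathscr J_{\ep,T}^{\theta}(y_\ep) \,\leq\, \mathscr J_{\ep,T}^{\theta}(y_\ep^0) \,=\, |S|\bigl(c_1|\bar A_T^{\theta}|^2 + \bar e_T\bigr),
\]
which is independent of $\ep$. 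Lemma \ref{lemma:compactness} then supplies, up to a subsequence, a limit map $y\in\W^{2,2}(I,\RR^3)$, an orthonormal frame $(d_1|d_2|d_3)\in \W^{1,2}(I,{\rm SO}(3))$ with $d_1=y'$ and $d_1'\cdot d_2 = 0$ a.e., and $\gamma\in\LL^2(S,\RR^3)$, realizing the three convergences displayed in the corollary (including \eqref{here_gamma}). In particular $(d_1,d_2,d_3)\in\mathcal A$.

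To see that $(d_1,d_2,d_3)$ minimizes $\mathscr J_T^{\theta}$ and to get \eqref{cvg_minima_T} at the same time, I would combine the two parts of Theorem \ref{thm:gamma} in the usual way. Fix an arbitrary competitor $(\tilde d_1,\tilde d_2,\tilde d_3)\in \mathcal A$ and a primitive $\tilde y$ with $\tilde y' = \tilde d_1$. By Theorem \ref{thm:gamma}(2) there is a recovery sequence $(\tilde y_\ep)\subset \W^{2,2}_{{\rm iso},\ep}(S,\RR^3)$ with $\lim_{\ep} \mathscr J_{\ep,T}^{\theta}(\tilde y_\ep) = \mathscr J_T^{\theta}(\tilde d_1,\tilde d_2,\tilde d_3)$. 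Minimality of $y_\ep$ and Theorem \ref{thm:gamma}(1) give
\[
\mathscr J_T^{\theta}(d_1,d_2,d_3) \,\leq\, \liminf_{\ep\to0}\mathscr J_{\ep,T}^{\theta}(y_\ep) \,\leq\, \lim_{\ep\to0}\mathscr J_{\ep,T}^{\theta}(\tilde y_\ep) \,=\, \mathscr J_T^{\theta}(\tilde d_1,\tilde d_2,\tilde d_3).
\]
Varying $(\tilde d_1,\tilde d_2,\tilde d_3)$ shows that $(d_1,d_2,d_3)$ minimizes $\mathscr J_T^{\theta}$, and taking $(\tilde d_1,\tilde d_2,\tilde d_3)$ itself to be a minimizer yields the sandwich
\[
\min_{\mathcal A}\mathscr J_T^{\theta} \,=\, \mathscr J_T^{\theta}(d_1,d_2,d_3) \,\leq\, \liminf_{\ep\to0}\min \mathscr J_{\ep,T}^{\theta} \,\leq\, \limsup_{\ep\to0}\min \mathscr J_{\ep,T}^{\theta} \,\leq\, \min_{\mathcal A}\mathscr J_T^{\theta},
\]
proving \eqref{cvg_minima_T}.

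Since the machinery has already been set up in Lemma \ref{lemma:compactness} and Theorem \ref{thm:gamma}, there is no genuine obstacle. The only place where one has to be slightly attentive is the preliminary energy bound: the $\ep$-problem a priori has no coercivity on its own, and the bound is secured only through the existence of the trivial flat competitor together with the fact that $\bar A_T^{\theta}$ and $\bar e_T$ are constants. Once this is in place, the rest is the textbook deduction of convergence of minimizers and minima from $\Gamma$-convergence plus equi-coerciveness.
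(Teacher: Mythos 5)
Your proof is correct and follows exactly the route the paper indicates ("as an easy consequence of the compactness and $\Gamma$-convergence results, via standard arguments"): a flat competitor gives the uniform energy bound needed to trigger Lemma \ref{lemma:compactness}, and then the liminf inequality plus recovery sequences give minimality of the limit and convergence of minima. The computation $\mathscr J_{\ep,T}^{\theta}(y_\ep^0)=|S|\bigl(c_1|\bar A_T^{\theta}|^2+\bar e_T\bigr)$ is also accurate, using $\tr\bar A_T^\theta=0$.
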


Notice that thanks to \cite[Lemma 3.8]{Ag_De_bend}
and Lemma \ref{minimi_T} below, 
the minimum of $\mathscr J_{\ep,T}^{\theta}$
in $\W^{2,2}_{{\rm iso},\ep}(S,\RR^3)$
and the minimum of  
$\mathscr J_T^{\theta}$ in $\mathcal A$
can be computed explicitly,
so that the convergence in \eqref{cvg_minima_T} 
can be checked by hand.
The minimizing sequences and the minimizer of 
$\mathscr J_T^{\theta}$ can be computed
as well, together with $\gamma$ in \eqref{here_gamma}
(see the proof of Theorem \ref{thm:gamma} (2)).
Therefore, also the convergence of the minimizing
sequences can be checked by hand. 

The following proposition gives the
explicit expression of $\overline Q_T^{\,\theta}$.  

\begin{proposition}
\label{prop_T}
$\overline Q_T^{\,\theta}$ is a continuous function given by
\[
\overline Q_T^{\,\theta}(\alpha,\beta)
\,=\, 
\left\{\begin{array}{lc}
4c_1\kk\big(a_\theta\alpha - b_\theta\beta\big)
+c_1\kk^2\Big(2-\frac{c_1}ca_\theta^2\Big) + \bar e_T, 
& 
\quad\mbox{in}\quad\mathcal D_T\\
 4\big(c\beta^2 - c_1\kk b_\theta\beta\big)
+c_1\kk^2\Big(2-\frac{c_1}ca_\theta^2\Big) +\bar e_T,  
&
\quad\mbox{in}\quad\mathcal U_T\\
c\frac{(\alpha^2+\beta^2)^2}{\alpha^2} + 
2c_1\kk\Big(a_\theta\frac{\alpha^2-\beta^2}{\alpha} 
- 2b_\theta\beta\Big)
+2c_1\kk^2+\bar e_T,\quad  
& 
\quad\mbox{in}\quad\mathcal V_T, 
\end{array}\right.
\]
where $a_\theta$ and $b_\theta$ are defined in \eqref{A^T},
and
\begin{align*}
\mathcal D_T
&:=\left\{(\alpha,\beta)\in\RR^2:\frac{c_1}c\kk
a_\theta\,\alpha > \beta^2+\alpha^2\right\},\\
\mathcal U_T
&:=\left\{(\alpha,\beta)\in\RR^2:\frac{c_1}c\kk 
a_\theta\,\alpha \leq \beta^2-\alpha^2\right\},\\
\mathcal V_T
&:=\RR^2\setminus(\mathcal D_T\cup\mathcal U_T).
\end{align*}
\end{proposition}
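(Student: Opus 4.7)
The plan is to carry out the minimization in $\gamma$ defining $\overline Q_T^{\,\theta}$ explicitly. The key simplification is that in the twist setting the target tensor $\bar A_T^\theta$ given in \eqref{A^T} is traceless, so the linear term \eqref{L_theta} collapses to
\[
L_T^\theta(M)
= -2c_1 M\cdot \bar A_T^\theta + 2c_1\kk^2 + \bar e_T
= -2c_1\kk\bigl[a_\theta(\gamma-\alpha)+2b_\theta\beta\bigr] + 2c_1\kk^2 + \bar e_T
\]
for $M = \begin{pmatrix}\alpha & \beta \\ \beta & \gamma\end{pmatrix}$. Using $|M|^2 = \alpha^2+2\beta^2+\gamma^2$ and $\det M = \alpha\gamma-\beta^2$, the objective $F(\gamma) := c|M|^2 + 2c|\det M| + L_T^\theta(M)$ is the sum of a strictly convex quadratic in $\gamma$, a piecewise-linear convex absolute-value term and an affine term; it is therefore strictly convex and coercive, and the minimum in $\gamma$ is uniquely attained.

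The next step is to split according to the sign of $\det M$: on $\{\alpha\gamma\geq\beta^2\}$ the objective reads as a smooth quadratic $F_+(\gamma)$, and similarly $F_-(\gamma)$ on $\{\alpha\gamma\leq\beta^2\}$. A direct computation of $F_\pm'(\gamma)=0$ produces the candidate minimizers
\[
\gamma^\ast = \frac{c_1\kk a_\theta}{c}-\alpha,\qquad
\gamma^{\ast\ast} = \frac{c_1\kk a_\theta}{c}+\alpha.
\]
By convexity of $F$, the global minimizer equals $\gamma^\ast$ precisely when it is feasible for the $+$ branch, i.e.\ $\alpha\gamma^\ast\geq\beta^2$, and $\gamma^{\ast\ast}$ when $\alpha\gamma^{\ast\ast}\leq\beta^2$. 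These two inequalities rearrange exactly to $\frac{c_1}{c}\kk a_\theta\alpha\geq\beta^2+\alpha^2$ and $\frac{c_1}{c}\kk a_\theta\alpha\leq\beta^2-\alpha^2$, which are the defining inequalities of $\mathcal D_T$ and $\mathcal U_T$. When neither candidate is feasible—precisely the region $\mathcal V_T$—convexity forces the minimizer onto the kink $\gamma_0 := \beta^2/\alpha$ of the absolute value. The degenerate case $\alpha=0$ falls unambiguously in $\mathcal U_T$ and matches the $\alpha=0$ specialization of that formula.

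Plugging $\gamma^\ast$, $\gamma^{\ast\ast}$, $\gamma_0$ back into $F$ and simplifying by routine one-variable algebra (using at $\gamma_0$ that $\det M=0$ and $|M|^2 = (\alpha^2+\beta^2)^2/\alpha^2$) yields the three branch formulas as stated. Continuity of $\overline Q_T^{\,\theta}$ on $\RR^2$ then comes for free: the family $\gamma\mapsto F(\gamma;\alpha,\beta)$ is jointly continuous and uniformly coercive in $\gamma$ on compact $(\alpha,\beta)$ sets, so its minimum value depends continuously on $(\alpha,\beta)$, which in particular forces the three formulas to agree on the common boundaries $\frac{c_1}{c}\kk a_\theta\alpha = \beta^2\pm\alpha^2$—a useful sanity check on the algebra.

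The only real obstacle I anticipate is the bookkeeping involved in simplifying the three case-by-case expressions into the compact forms shown and keeping the sign conventions straight so that $\mathcal D_T$, $\mathcal U_T$, $\mathcal V_T$ are disjoint and exhaust $\RR^2$; the strict convexity of $F$ in $\gamma$ is the structural fact that removes any ambiguity about which candidate is the true minimizer, reducing the whole proposition to organized one-variable calculus.
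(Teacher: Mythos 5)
Your proposal is correct and arrives at the same branch formulas by the same basic route (explicit one-variable minimization over $\gamma$), but the convexity observation streamlines the verification in a way the paper does not exploit. The paper finds the stationary points $\gamma_1$, $\gamma_2$ of the two smooth branches and then, to certify that they are global minimizers, explicitly computes $f(\gamma_1)-f(\beta^2/\alpha)$ and $f(\gamma_2)-f(\beta^2/\alpha)$ and checks these are $\leq 0$ (with equality only on the boundaries $\frac{c_1}{c}\kk a_\theta\alpha=\beta^2\pm\alpha^2$, which also serves to establish continuity). You instead note that $F(\gamma)=c\gamma^2+2c|\alpha\gamma-\beta^2|+(\text{affine})$ is strictly convex in $\gamma$, so any feasible stationary point of a smooth branch is automatically the unique global minimizer, and if neither branch stationary point is feasible the minimizer must sit at the kink $\gamma_0=\beta^2/\alpha$; this removes the comparison step entirely, and you get continuity of the value function ``for free'' from joint continuity plus locally uniform coercivity, rather than from boundary matching of the branch formulas. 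Both proofs reduce to the same algebra for the formulas; your version buys conceptual economy and, as you note, turns the agreement of the branches on $\partial\mathcal D_T$ and $\partial\mathcal U_T$ into a consequence rather than something that has to be checked. One small remark: your feasibility conditions $\alpha\gamma^\ast\geq\beta^2$ and $\alpha\gamma^{\ast\ast}\leq\beta^2$ are stated with non-strict inequalities, while $\mathcal D_T$ is defined with a strict inequality and $\mathcal U_T$ with a non-strict one; at the boundary $\alpha\gamma^\ast=\beta^2$ the stationary point coincides with the kink, so the two formulas agree there and the discrepancy is harmless, but it is worth saying so explicitly to align with the stated partition of $\RR^2$.
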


\begin{proof}
For a matrix
\[
M = \left(\begin{array}{rr} \alpha & \beta \\ \beta & \gamma\end{array}\right),
\]
the expression 
for $\overline Q_T^{\,\theta}$ in \eqref{Qasmin} 
(with $L_T^\theta$ in place of $L^\theta$) becomes
\[
\overline Q_T^{\,\theta}(\alpha,\beta)=\min_{\gamma\in\RR} f(\gamma),
\]
where
\[
f(\gamma) 
:= 
c(\alpha^2 + 2\beta^2 +\gamma^2) + 2c|\alpha\gamma - \beta^2| 
+ 2c_1 \kk a_\theta (\alpha - \gamma) 
- 4c_1 \kk b_\theta\beta + 2c_1\kk^2 + \bar e_T.
\]
Note that if $\alpha=0$, $f$ reduces to the differentiable function
\[
f(\gamma) =  4c \beta^2 +c\gamma^2 - 2c_1 \kk a_\theta \gamma 
- 4c_1 \kk b_\theta\beta + 2c_1\kk^2 + \bar e_T
\]
and it is minimised at $\gamma = \frac{c_1}{c}\kk a_\theta$, 
i.e. for all $\beta\in\RR$
\[
\overline Q_T^{\,\theta}(0,\beta) 
=
 4\left(c\beta^2 - c_1\kk b_\theta\beta\right)
+c_1\kk^2\Big(2-\frac{c_1}c a_\theta^2\Big) +\bar e_T.
\]
Next assume that $\alpha\neq 0$. If $\gamma=\beta^2/\alpha$, 
\[
f(\beta^2/\alpha) 
=
c\frac{(\alpha^2+\beta^2)^2}{\alpha^2} + 
2c_1\kk\Big(a_\theta\frac{\alpha^2-\beta^2}{\alpha} 
- 2 b_\theta\beta\Big)
+2c_1\kk^2+\bar e_T
\] 
and for any $\gamma\neq\beta^2/\alpha$, the function $f$ is differentiable with
\[
f^\prime(\gamma)/2 = c\gamma + c\alpha\sgn(\alpha\gamma-\beta^2) 
- c_1\kk a_\theta,
\]
which vanishes at 
\[
\gamma = \frac{c_1}{c}\kk a_\theta - 
\alpha\sgn(\alpha\gamma-\beta^2).
\] 
If $\alpha\gamma > \beta^2$,
then $\gamma_1 = \frac{c_1}{c}\kk a_\theta - \alpha$ is the critical point and this can only be true in the regime
\[
\frac{c_1}c\kk
a_\theta\,\alpha > \beta^2+\alpha^2.
\]
In this case, we compute
\[
f(\gamma_1) 
= 
4c_1\kk(a_\theta\alpha - b_\theta\beta)
+c_1\kk^2\Big(2-\frac{c_1}c a_\theta^2\Big) + \bar e_T.
\]
Similarly, for $\alpha\gamma < \beta^2$, we find that 
$\gamma_2 = \frac{c_1}{c}\kk a_\theta + \alpha$ is the critical point which can only be true in the regime
\[
\frac{c_1}c\kk
a_\theta\,\alpha
< \beta^2-\alpha^2
\]
and then
\[
f(\gamma_2) 
=
 4\left(c\beta^2 - c_1\kk b_\theta\beta\right)
+c_1\kk^2\Big(2-\frac{c_1}c a_\theta^2\Big) +\bar e_T.
\]
On the other hand, in the regime
\[
\beta^2-\alpha^2
\leq 
\frac{c_1}c\kk
a_\theta\,\alpha
\leq 
\beta^2+\alpha^2,
\]
a straightforward computation shows that $f^\prime(\gamma)<0$ if $\gamma<\beta^2/\alpha$ and $f^\prime(\gamma)>0$ if $\gamma>\beta^2/\alpha$. Hence, in this regime, and with $\alpha\neq 0$, the minimum value of $f$ is achieved at $\gamma=\beta^2/\alpha$ so that
\[
\overline Q_T^{\,\theta}(\alpha,\beta) 
= 
f(\beta^2/\alpha) 
= 
c\frac{(\alpha^2+\beta^2)^2}{\alpha^2} + 
2c_1\kk\Big(\bar{a}_\theta\frac{\alpha^2-\beta^2}{\alpha} 
- 2 b_\theta\beta\Big)
+2c_1\kk^2+\bar e_T
\]
To compute $\overline Q_T^{\,\theta}$
in the respective regimes of values of 
$\kk a_\theta\,\alpha$,
one needs to understand whether the value of 
$f$ at its respective local minima $\gamma_1$ and $\gamma_2$ 
is lower than $f(\beta^2/\alpha)$. We compute
\begin{align*}
f(\gamma_1) - f(\beta^2/\alpha) 
& = 
- \frac{c_1^2}{c}\kk^2a_\theta^2 
+ 4c_1\kk a_\theta\alpha 
- c\frac{(\alpha^2+\beta^2)^2}{\alpha^2} - 2c_1\kk a_\theta\frac{\alpha^2-\beta^2}{\alpha}\\
& = - \frac{c_1^2}{c}\kk^2a_\theta^2 - 
c\frac{(\alpha^2+\beta^2)^2}{\alpha^2} 
+ 2c_1\kk a_\theta\frac{\alpha^2+\beta^2}{\alpha}\\
& = -c\left\{\frac{c_1^2}{c^2}\kk^2a_\theta^2 
+ \frac{(\alpha^2+\beta^2)^2}{\alpha^2} 
- 2\frac{c_1}{c}\kk a_\theta
\frac{\alpha^2+\beta^2}{\alpha}\right\}\\
& = -c\left[\frac{\alpha^2+\beta^2}{\alpha} - 
\frac{c_1}{c}\kk a_\theta\right]^2 \leq 0
\end{align*}
with equality if and only if
\[
\frac{c_1}c\kk
a_\theta\,\alpha
 = 
 \beta^2+\alpha^2.
\]
Similarly,
\begin{align*}
f(\gamma_2) - f(\beta^2/\alpha) 
& = 
- \frac{c_1^2}{c}\kk^2a_\theta^2 + 4c\beta^2 - 
c\frac{(\alpha^2+\beta^2)^2}{\alpha^2} - 2c_1\kk a_\theta\frac{\alpha^2-\beta^2}{\alpha}\\
& = - \frac{c_1^2}{c}\kk^2a_\theta^2 - 2c_1\kk a_\theta
\frac{\alpha^2-\beta^2}{\alpha} 
- c\frac{(\alpha^2-\beta^2)^2}{\alpha^2}\\
& = -c\left\{\frac{c_1^2}{c^2}\kk^2a_\theta^2 
+ \frac{(\alpha^2 - \beta^2)^2}{\alpha^2} 
+ 2\frac{c_1}{c}\kk a_\theta
\frac{\alpha^2-\beta^2}{\alpha}\right\}\\
& = -c\left[\frac{\alpha^2-\beta^2}{\alpha} + 
\frac{c_1}{c}\kk a_\theta\right]^2 \leq 0
\end{align*}
with equality if and only if
\[
\frac{c_1}c\kk
a_\theta\,\alpha
 = 
 \beta^2 - \alpha^2.
\]
Hence, we deduce the result.
Note that these computations show that
$\overline Q_T^{\,\theta}$ is continuous. 
\end{proof}

Notice that when $\kk=0$ (and $c=1$), modulo the constant $\bar e_T$ 
the expression for $\overline Q_T^{\,\theta}$
in Proposition \ref{prop_T}
reduces to expression (1.5) in \cite{Freddi2015}, namely,
\begin{equation}
\label{bar_Q}
\overline Q(\alpha,\beta)
:= 
\left\{\begin{array}{ll}
 4\,\beta^2
&
\quad\mbox{if}\quad\alpha^2\leq\beta^2\\
\frac{(\alpha^2+\beta^2)^2}{\alpha^2} 
& 
\quad\mbox{if}\quad\alpha^2>\beta^2.
\end{array}\right.
\end{equation}
Indeed, in this case
\begin{equation}
\label{sets_k_null}
\mathcal D_T=\emptyset,
\qquad\qquad
\mathcal U_T
=\left\{(\alpha,\beta)\in\RR^2:\alpha^2 \leq \beta^2\right\}.
\end{equation}
For $\kk>0$,
it is natural to distinguish the case $\theta=\pi/4$ 
(and, similarly, the case $\theta=3\pi/4$)
from all the other cases. Indeed, we have
$a_{\pi/4}=0$ and $b_{\pi/4}=1$, so that
$\mathcal D_T$ and $\mathcal U_T$
are again given by \eqref{sets_k_null}, whereas
\[
\overline Q_T^{\,\pi/4}(\alpha,\beta)
= 
\left\{\begin{array}{lc}
 4\big(c\beta^2 - c_1\kk\beta\big)
+2c_1\kk^2 +\bar e_T,  
&
\quad\mbox{if}\quad\alpha^2\leq\beta^2\\
c\frac{(\alpha^2+\beta^2)^2}{\alpha^2}
-4c_1\kk\beta
+2c_1\kk^2
+\bar e_T,
\quad  
& 
\quad\mbox{if}\quad\alpha^2>\beta^2.
\end{array}\right.
\]
Observe that for all
$\theta\in[0,\pi/2)\setminus\{\pi/4,3\pi/4\}$,
setting $\rho:=c_1ka_\theta/(2c)$,
we have that 
$\mathcal D_T$ coincides with the (open) disk 
$(\alpha-\rho)^2+\beta^2<\rho^2$
and $\mathcal U_T$ with the (closed) region 
inside the hyperbola $(\alpha+\rho)^2-\beta^2=\rho^2$.

\begin{figure}[htbp]
\label{figure_T}
\begin{center}
\includegraphics[width=4.1cm]{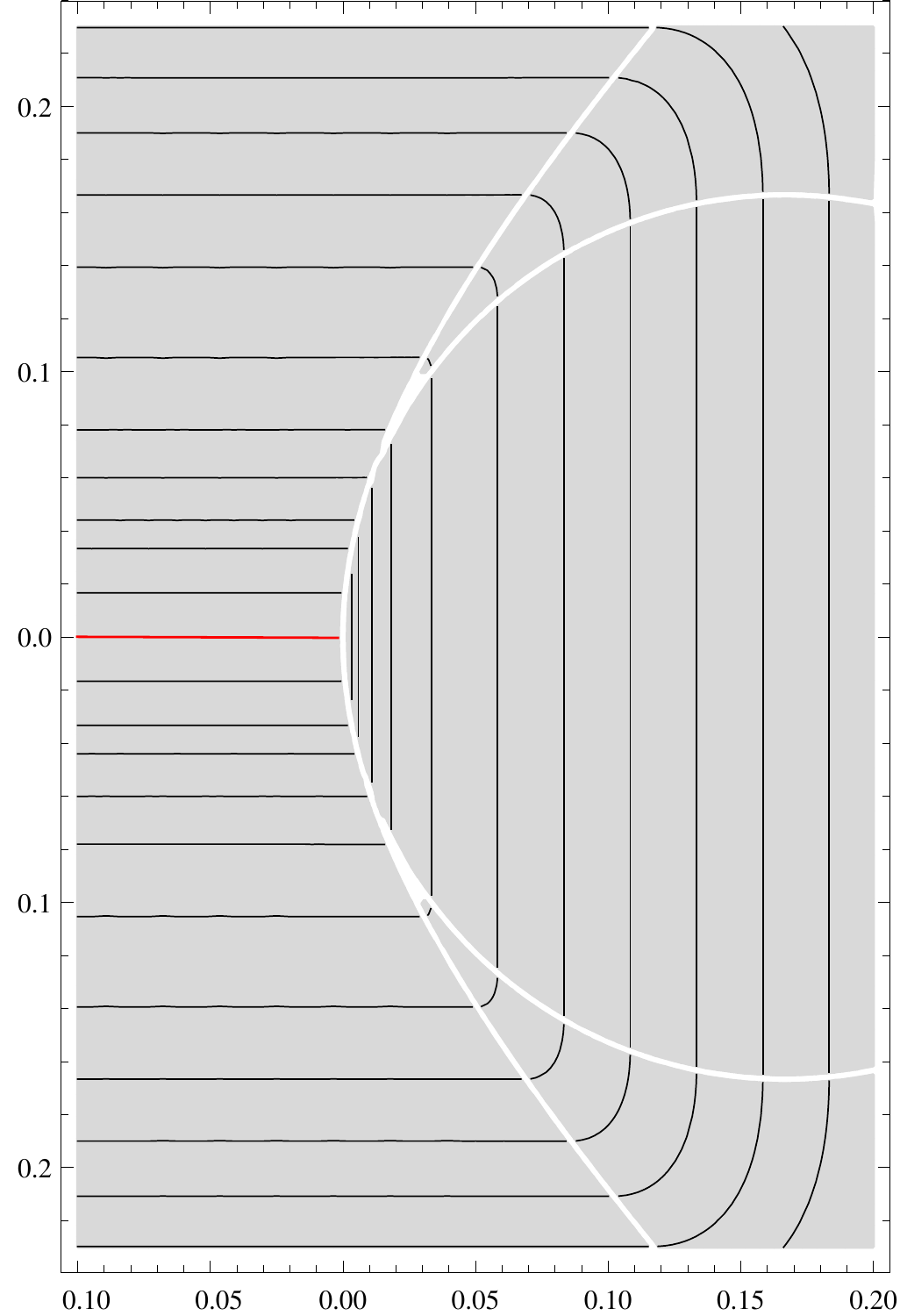}
\hspace{1cm}
\includegraphics[width=4.1cm]{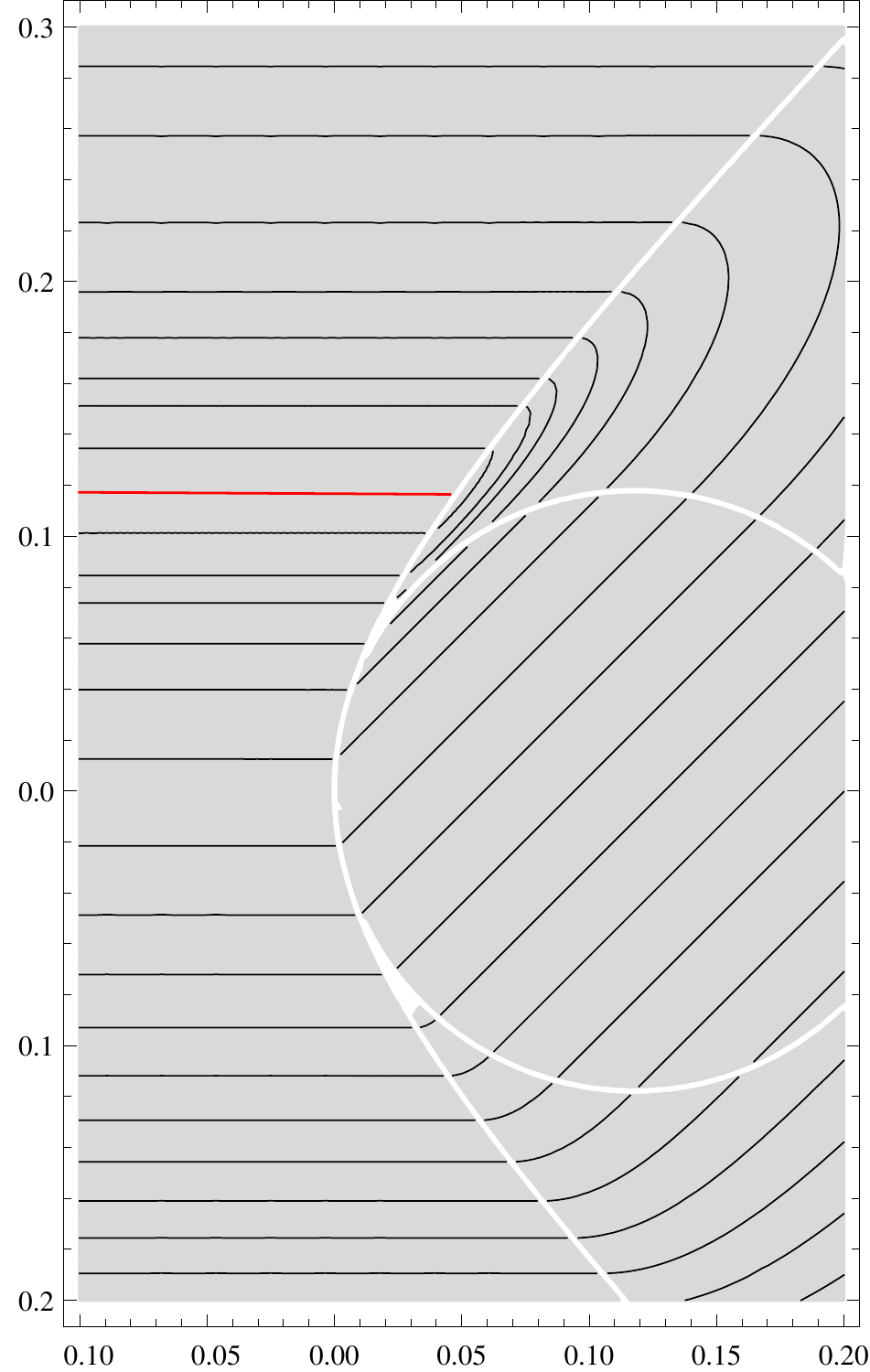}\\
\includegraphics[width=4.8cm]{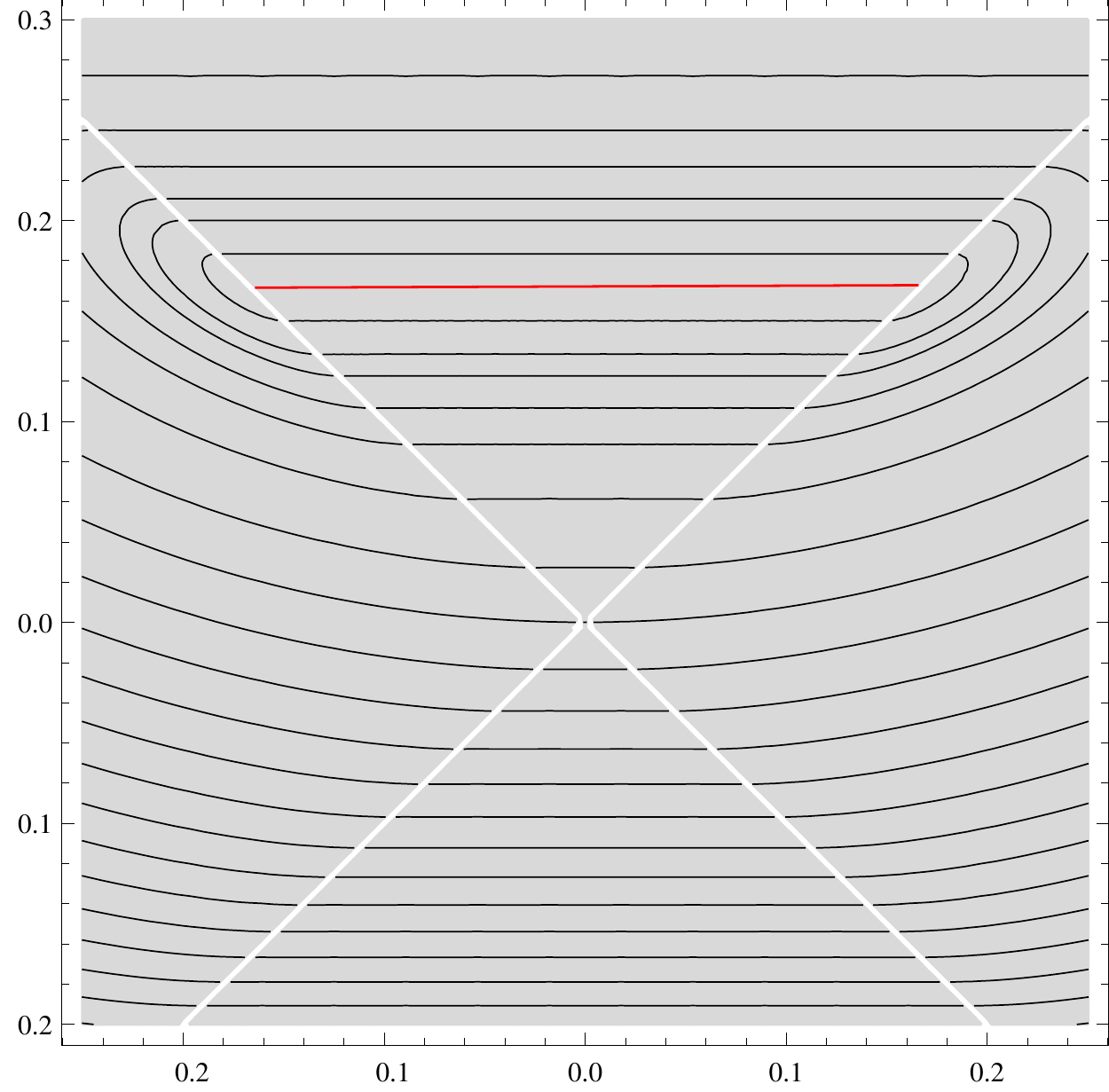}
\hspace{1cm}
\includegraphics[width=4.1cm]{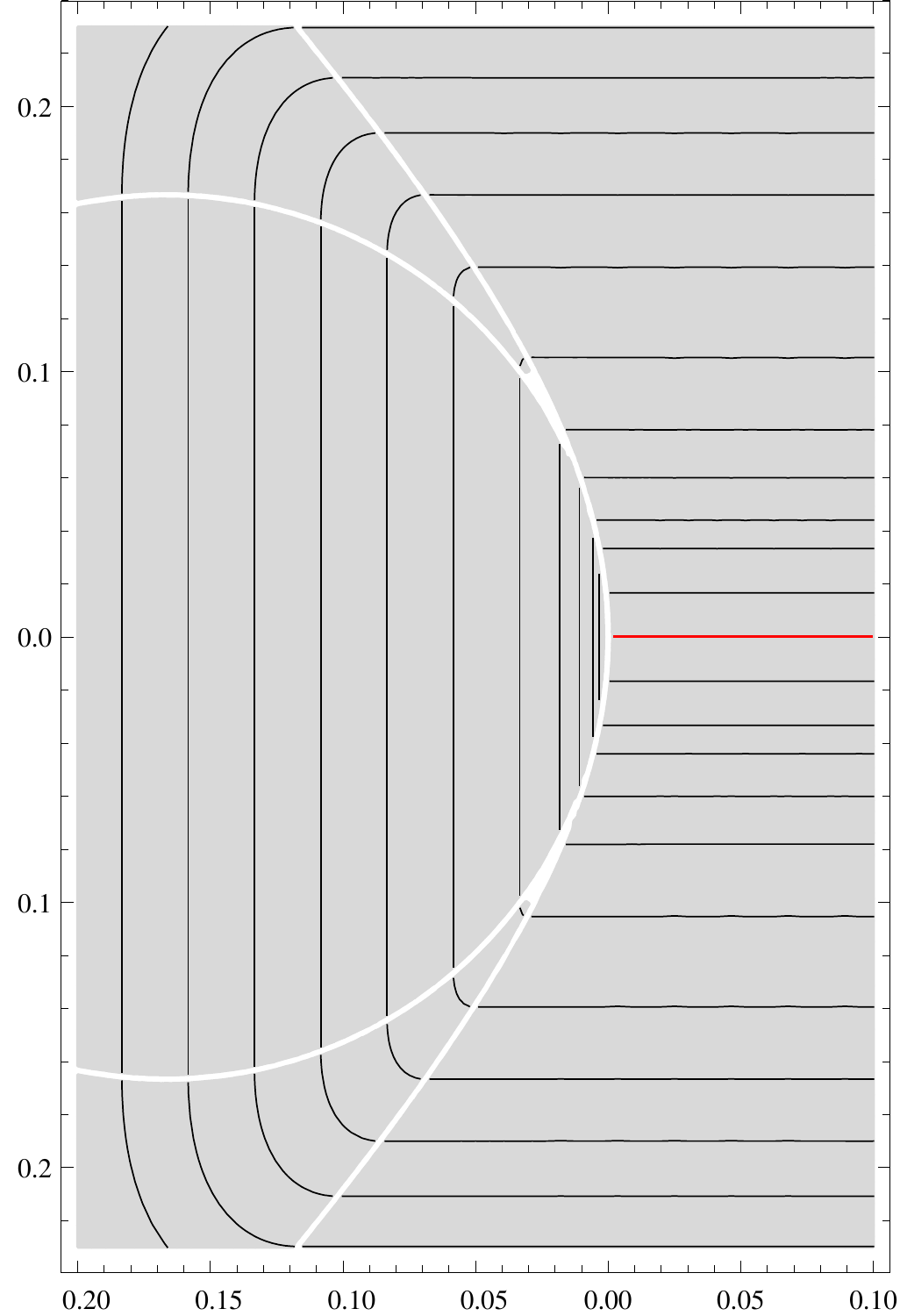}
\end{center}
\caption{Phase diagrams with level curves of 
$\overline Q_T^{\,\theta}$,
where the white lines emphasize the boundary of $\mathcal V_T$
and the red lines the set of minimisers.
The pictures from top left to bottom right correspond to the cases 
$\theta=0$, $\theta=\pi/8$, $\theta=\pi/4$, 
and $\theta=\pi/2$, respectively.} 
\end{figure}

We want to examine the minimisers of $\overline Q_T^{\,\theta}$.
Observe that in the case $\kk=0$ the above function 
$(\alpha,\beta)\mapsto\overline Q(\alpha,\beta)$
is minimised by $(0,0)$.
When instead $\kk>0$, we have that the minimisers of 
$\overline Q_T^{\,\theta}$ lie on a segment.
This is the content of the following lemma.

\begin{lemma}
\label{minimi_T}
For every $0\leq\theta<\pi$,
$\overline Q_T^{\,\theta}$ attains its minimum value
precisely on the segment
$\big[\alpha_{\theta,1}^T,\alpha_{\theta_,2}^T\big]
\times\{\beta_\theta\}$,
where
\begin{equation*}
\beta_\theta
:=
\frac{\kk\,c_1}{2\,c}\sin 2\theta,
\end{equation*}
and
\begin{equation*}
\alpha_{\theta,1}^T
:=
-\frac{\kk\,c_1}{2\,c}(1+\cos2\theta),
\qquad\qquad
\alpha_{\theta,2}^T
:=
\frac{\kk\,c_1}{2\,c}(1-\cos2\theta).
\end{equation*}
Moreover,
\begin{equation}
\label{min_Q_T}
\min_{\RR^2}\overline Q_T^{\,\theta}
\,=\,
c_1\kk^2\left(2-\frac{c_1}c\right)
+\bar e_T.
\end{equation}
\end{lemma}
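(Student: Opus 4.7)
The strategy is to show $\overline Q_T^{\,\theta}(\alpha,\beta) \geq c_1\kk^2(2-c_1/c) + \bar e_T$ on all of $\RR^2$, with equality precisely on the claimed segment, by examining each of the three regimes of Proposition \ref{prop_T} separately. Throughout I will use the identity $a_\theta^2 + b_\theta^2 = \cos^2 2\theta + \sin^2 2\theta = 1$.

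On $\mathcal U_T$ the expression of Proposition \ref{prop_T} depends only on $\beta$, and completing the square will yield
\[
\overline Q_T^{\,\theta}(\alpha,\beta) = 4c\Big(\beta - \frac{c_1\kk b_\theta}{2c}\Big)^2 + c_1\kk^2\Big(2 - \frac{c_1}{c}\Big) + \bar e_T,
\]
so the bound holds on $\mathcal U_T$ with equality iff $\beta = \beta_\theta$. Substituting $\beta = \beta_\theta$ into the defining inequality of $\mathcal U_T$ reduces to $(\alpha + c_1\kk a_\theta/(2c))^2 \leq (c_1\kk/(2c))^2(a_\theta^2+b_\theta^2) = (c_1\kk/(2c))^2$, which is precisely the interval $[\alpha_{\theta,1}^T,\alpha_{\theta,2}^T]$.

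On $\mathcal V_T$ the case $\alpha = 0$ would force inclusion in $\mathcal U_T$, so I set $t = \beta/\alpha$ and $\psi = \arctan t$. Using $\cos 2\psi = (1-t^2)/(1+t^2)$, $\sin 2\psi = 2t/(1+t^2)$, and the angle-sum formula, the identity $a_\theta(1-t^2) - 2b_\theta t = (1+t^2)\cos 2(\theta+\psi)$ brings the $\mathcal V_T$ expression into the form
\[
\overline Q_T^{\,\theta} = c(1+t^2)^2\alpha^2 + 2c_1\kk(1+t^2)\cos 2(\theta+\psi)\,\alpha + 2c_1\kk^2 + \bar e_T,
\]
whose unconstrained minimum over $\alpha \in \RR$ is $-c_1^2\kk^2\cos^2 2(\theta+\psi)/c + 2c_1\kk^2 + \bar e_T \geq c_1\kk^2(2-c_1/c) + \bar e_T$. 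On $\mathcal D_T$ the function is affine with nonzero gradient $4c_1\kk(a_\theta,-b_\theta)$ and $\mathcal D_T$ is a bounded open disk (empty when $a_\theta=0$), so the infimum over $\overline{\mathcal D_T}$ is attained on $\partial\mathcal D_T \subset \overline{\mathcal V_T}$; continuity of $\overline Q_T^{\,\theta}$ (noted at the end of the proof of Proposition \ref{prop_T}) reduces this to the $\mathcal V_T$ bound.

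The main technical step is the $\mathcal V_T$ analysis: the angle-sum identity collapses the lower bound in that regime to the same value attained on the $\mathcal U_T$ segment. Equality in $\mathcal V_T$ can only hold when $\theta+\psi \in \frac{\pi}{2}\mathbb Z$; a short computation (using the half-angle formulas $1+\cos 2\theta = 2\cos^2\theta$ and $1-\cos 2\theta = 2\sin^2\theta$) shows that the two resulting slopes $t = -\tan\theta$ and $t = \cot\theta$ correspond exactly to the boundary points $\alpha_{\theta,1}^T$ and $\alpha_{\theta,2}^T$ of the segment found in step one, so no additional minimisers appear and \eqref{min_Q_T} follows.
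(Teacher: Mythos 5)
Your proof is correct and establishes the claim via a genuinely different route than the paper. The paper's proof is a critical-point analysis: it locates local minimisers in the interior of $\mathcal U_T$, checks that $\nabla\overline Q_T^{\,\theta}\neq 0$ on $\mathcal D_T$, solves the stationarity system in $\mathcal V_T$ to show its solutions lie only on $\partial\mathcal U_T$, and finally compares boundary values to conclude the local minimisers are global. You instead prove the pointwise lower bound $\overline Q_T^{\,\theta}\geq c_1\kk^2(2-c_1/c)+\bar e_T$ directly, region by region: on $\mathcal U_T$ by completing the square in $\beta$ (using $a_\theta^2+b_\theta^2=1$ to make the $\alpha$-dependent constant drop out), on $\mathcal V_T$ via the substitution $t=\beta/\alpha$, $\psi=\arctan t$, which collapses the linear-in-$\alpha$ coefficient to $2c_1\kk(1+t^2)\cos 2(\theta+\psi)$ so that the lower bound and the equality condition become transparent (and the two equality slopes $t=-\tan\theta$, $t=\cot\theta$ land exactly on the segment endpoints via half-angle identities), and on the bounded open disk $\mathcal D_T$ by the affine-on-convex-set argument with the continuity of $\overline Q_T^{\,\theta}$ across $\partial\mathcal D_T$. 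This buys a closed-form global bound and a cleaner equality characterisation; the paper's approach is shorter on $\mathcal U_T$ but leaves the final comparison of boundary values implicit. One small point worth spelling out: the unconstrained minimisation over $\alpha$ at fixed slope $t$ gives a valid lower bound on the constrained set $\mathcal V_T$ because restricting the domain can only raise the infimum, and you correctly verify that the two equality points lie on $\partial\mathcal U_T\cap\partial\mathcal V_T$ (hence in $\mathcal U_T$, not in the open set $\mathcal V_T$), so they are already accounted for by the segment.
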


Notice that the segment
$\big[\alpha_{\theta,1}^T,\alpha_{\theta_,2}^T\big]
\times\{\beta_\theta\}$
is a subset of $\mathcal U_T$
connecting the two branches of the hyperbola
$\frac{c_1}c\kk 
a_\theta\,\alpha \leq \beta^2-\alpha^2$
(a degenerate hyperbola in the case $\kk=0$ or 
$\theta\in\{\pi/4,3\pi/4\}$.

\begin{proof}
Consider the nontrivial case $\kk\neq0$.
A straightforward computation shows that the points
$(\alpha,\beta)\in\big(\alpha_{\theta,1}^T,\alpha_{\theta_,2}^T\big)
\times\{\beta_\theta\}$,
which lie in the interior of $\mathcal U_T$,
are local minimisers,
and that $\overline Q_T^{\,\theta}$ evaluated at each of these
points gives the value \eqref{min_Q_T}. 
At the same time, when 
$\mathcal D_T\neq\emptyset$, we have that 
$\nabla\overline Q_\theta(\alpha,\beta)\neq0$
for every $(\alpha,\beta)\in\mathcal D_T$, because 
$a_\theta$ and $b_\theta$ can never vanish simultaneously.
Moreover, a point $(\alpha,\beta)$ lying in the interior of
$\mathcal V_T$ is a critical point of $\overline Q_T^{\,\theta}$ 
iff
\begin{align}
c\,\alpha-c\,\frac{\beta^4}{\alpha^3}
+c_1\kk\,a_\theta\frac{\beta^2}{\alpha^2}
+c_1\kk\,a_\theta&\,=\,0,\label{prima}\\
c\,\frac{\beta^3}{\alpha^2}
+c\,\beta
-c_1\kk\,a_\theta\frac{\beta}{\alpha}
-c_1\kk\,b_\theta&\,=\,0.\label{seconda}
\end{align}
Now, observe that in the case $\beta\neq0$, multiplying
the second equation by $\beta/\alpha$ and adding the first
yields 
\begin{equation*}
c\,(\alpha^2+\beta^2)+c_1\kk
\big(a_\theta\alpha+b_\theta\beta\big)\,=\,0.
\end{equation*} 
At the same time, equation \eqref{prima} is equivalent to
\begin{equation*}
c\,(\beta^2-\alpha^2)-c_1\kk\,a_\theta\alpha\,=\,0.
\end{equation*}
Summing up the last two equations gives $\beta=c_1\kk b_\theta/(2c)$
and in turn $\alpha=\alpha_{\theta,1}^T$ or 
$\alpha=\alpha_{\theta,2}^T$, when $b_\theta\neq0$. 
In the case $b_\theta=0$, a similar argument gives that
$(-c_1\kk/c,0)$ is the solution to \eqref{prima}--\eqref{seconda}.
In any case, we have obtained that the solutions of 
\eqref{prima}--\eqref{seconda} lie in 
$\partial\mathcal U_T$. Hence, there are no critical points
of $\overline Q_T^{\,\theta}$ in the interior of $\mathcal U_T$.
Other straightforward computations show that
the values of $\overline Q_T^{\,\theta}$ on 
$\partial U_T\setminus\{(\alpha_{\theta,1}^T,\beta_\theta),
(\alpha_{\theta,2}^T,\beta_\theta)\}$ are strictly smaller than
the local minimum, therefore the local minimisers are indeed global.
This concludes the proof of the lemma.
\end{proof}

Using the above lemma we can now find the minimisers
and the minimum of our limiting functional \eqref{funct_lim_T}.
Indeed, minimising the integrand pointwise, we have that
\begin{align*}
\min_\mathcal A
\mathscr J_T^{\theta}
\,=\,
\ell\min_{\RR^{2\times2}}\overline Q_T^{\,\theta}
&\,=\,
\ell
\left[
c_1\kk^2\left(2-\frac{c_1}c\right)
+\bar e_T
\right]\\
&\,=\,
\frac{\mu\,\ell}{\pi^4}
\left[
3\left(\frac{1+2\bm}{1+\bm}\right)
+\frac{\pi^4-4\pi^2-48}8
\right]
\frac{\eta_0^2}{h_0^2},
\end{align*}
where in the second equality we have used \eqref{min_Q_T}
and in the last one the constants $c_1$, $c$, $\kk$, and $\bar e_T$
have been substituted with their expressions given 
in terms of the parameters of the $3$D model.
The set 
$(\alpha,\beta)\in\big[\alpha_{\theta,1}^T,\alpha_{\theta_,2}^T\big]
\times\{\beta_\theta\}$
of the minimisers of $\overline Q_T^{\,\theta}$ is given by
\begin{equation*}
\left[\,
\frac{3\,\eta_0}{\pi^2(1+\bm)\,h_0}(-a_\theta-1)\,,\,
\frac{3\,\eta_0}{\pi^2(1+\bm)\,h_0}(1-a_\theta)
\,\right]
\times
\left\{\frac{3\,\eta_0}{\pi^2(1+\bm)\,h_0}b_\theta\right\}.
\end{equation*}
Hence, a minimiser of $\mathscr J_T^{\theta}$ is any
$(d_1,d_2,d_3)\in\mathcal A$ such that
$d_1^\prime\cdot d_3$ and $d_2^\prime\cdot d_3$ are constant 
and satisfy
\begin{equation*}
d_1^\prime\cdot d_3
\in
\left[\,
\frac{3\,\eta_0}{\pi^2(1+\bm)\,h_0}(-a_\theta-1)\,,\,
\frac{3\,\eta_0}{\pi^2(1+\bm)\,h_0}(1-a_\theta)
\,\right],
\quad
d_2^\prime\cdot d_3
\,=\,
\frac{3\,\eta_0}{\pi^2(1+\bm)\,h_0}b_\theta.
\end{equation*}
Notice that when $\alpha$ and $\beta$ are real constants,
the problem
\begin{equation}
\label{ODINA}
(d_1,d_2,d_3)\in\mathcal A,
\qquad\qquad
d_1^\prime\cdot d_3=\alpha,
\qquad\qquad
d_2^\prime\cdot d_3=\beta,
\end{equation}
has always a solution.
Indeed, identifying $(d_1,d_2,d_2)$ with the rotation matrix
$Q=(d_1|d_2|d_2)$, it is standard to see that finding a 
solution of \eqref{ODINA} consists in solving
\[
Q^\prime(s) 
\,=\, 
Q(s)  \left(\begin{array}{ccc} 0 & 0 & -\alpha \\  0 & 0 & -\beta \\ \alpha & \beta & 0\end{array}\right),
\]
for some fixed $Q(0)\in{\rm SO}(3)$.
Also, once $s\mapsto d_1(s)$ is given,
the mid-line curve is given by
\[
r(s) = r(0) + \int_0^s d_1(\sigma)d\sigma
\]
is fixed up to a translation.

For the convenience of the reader, let us make explicit the 
condition for which $\mathscr J_T^\theta$ is minimised
in two cases:
\begin{align*}
&\theta=0:
\qquad\,
(d_1^\prime\cdot d_3,d_2^\prime\cdot d_3)
\in
\left[\,-
\frac{6\,\eta_0}{\pi^2(1+\bm)\,h_0}\,,\,0
\,\right]
\times
\left\{0\right\},\\
&\theta=\pi/4:
\quad
(d_1^\prime\cdot d_3,d_2^\prime\cdot d_3)
\in
\left[\,-
\frac{3\,\eta_0}{\pi^2(1+\bm)\,h_0}\,,\,
\frac{3\,\eta_0}{\pi^2(1+\bm)\,h_0}
\,\right]
\times
\left\{\frac{3\,\eta_0}{\pi^2(1+\bm)\,h_0}\right\}.
\end{align*}

\begin{figure}[htbp]
\label{elicoidi_T_theta_0}
\begin{center}
\includegraphics[width=3cm]
{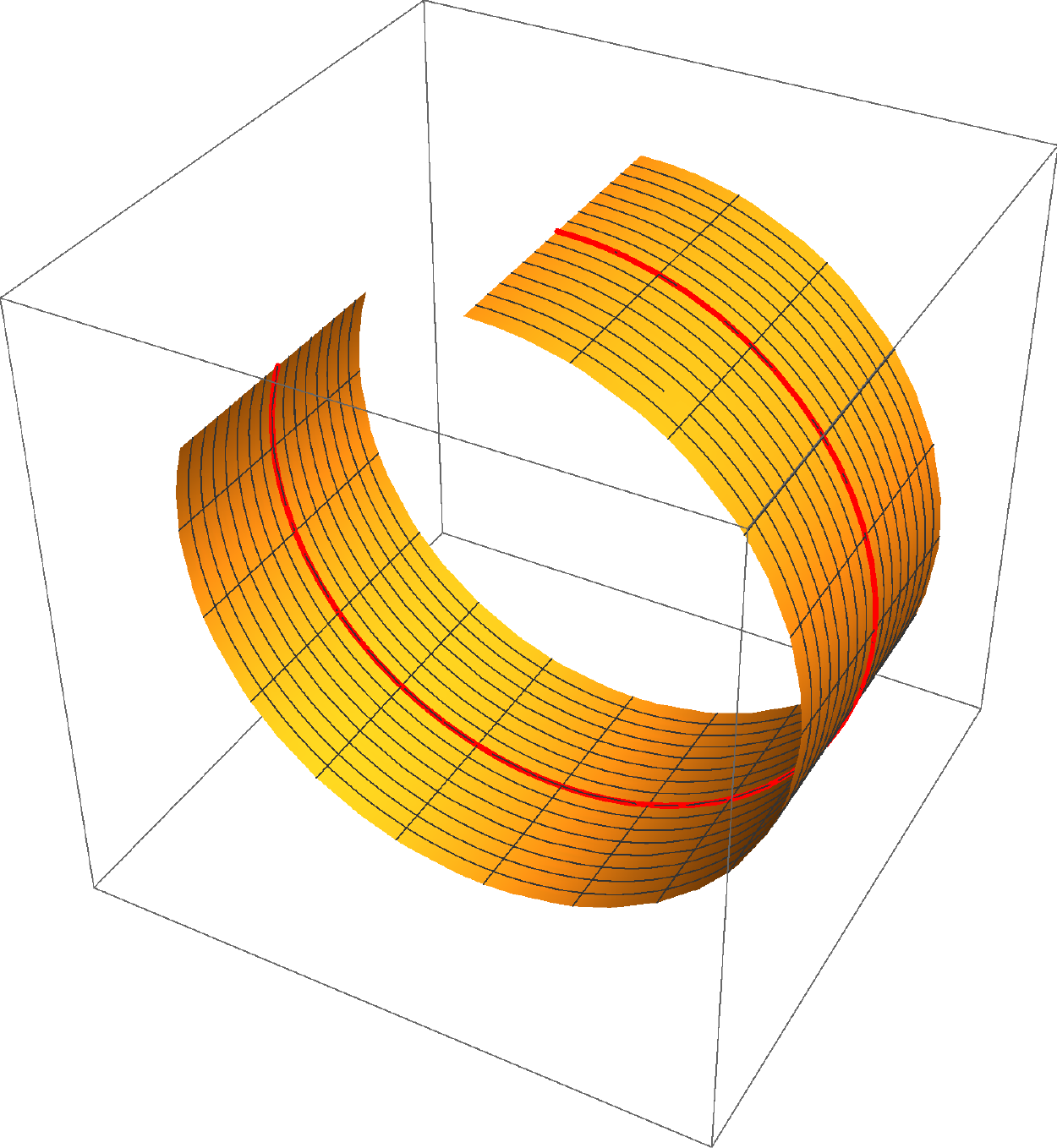}
\hspace{1.1cm}
\includegraphics[width=3cm]
{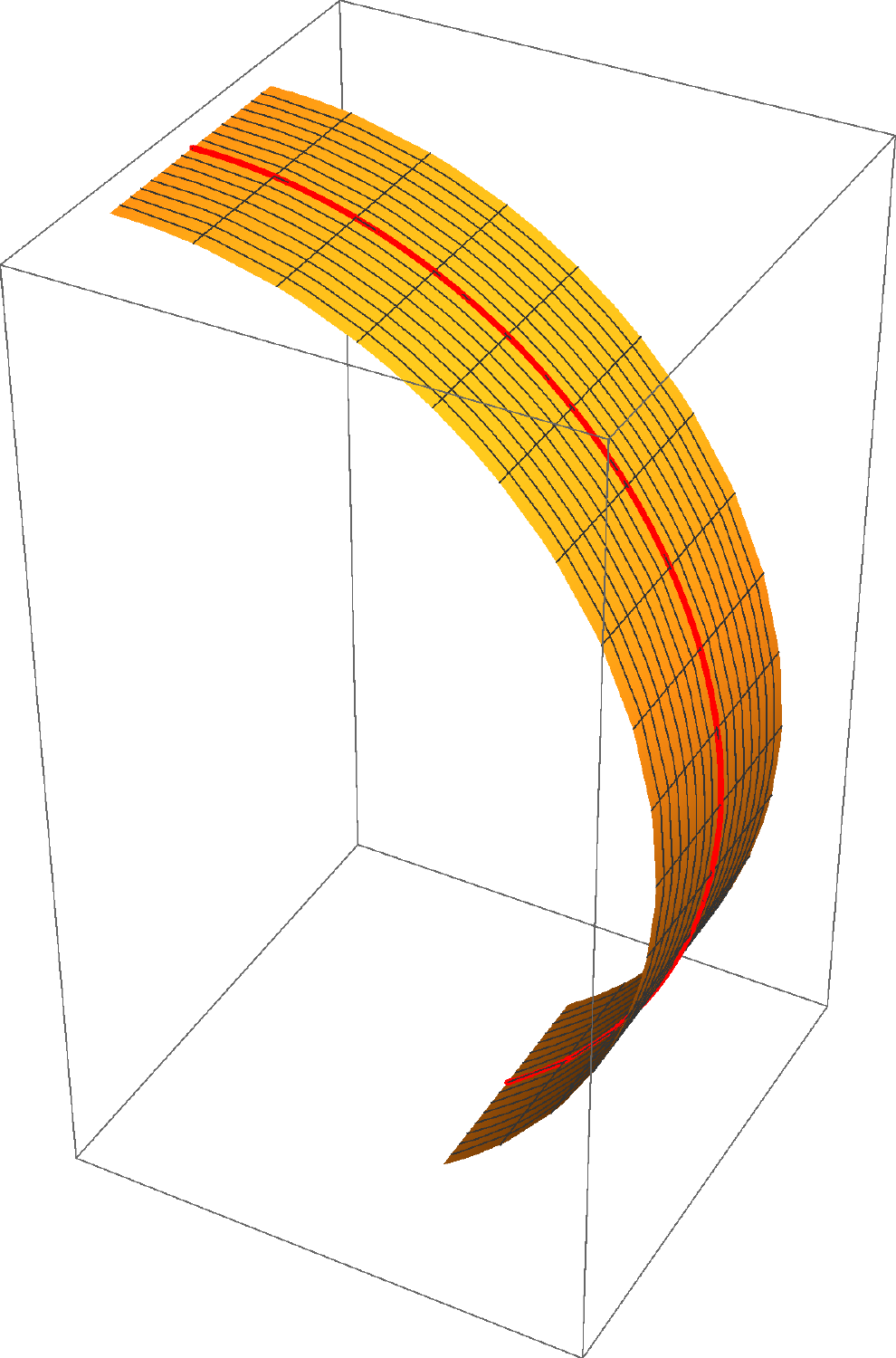}
\hspace{1.1cm}
\includegraphics[width=4cm]
{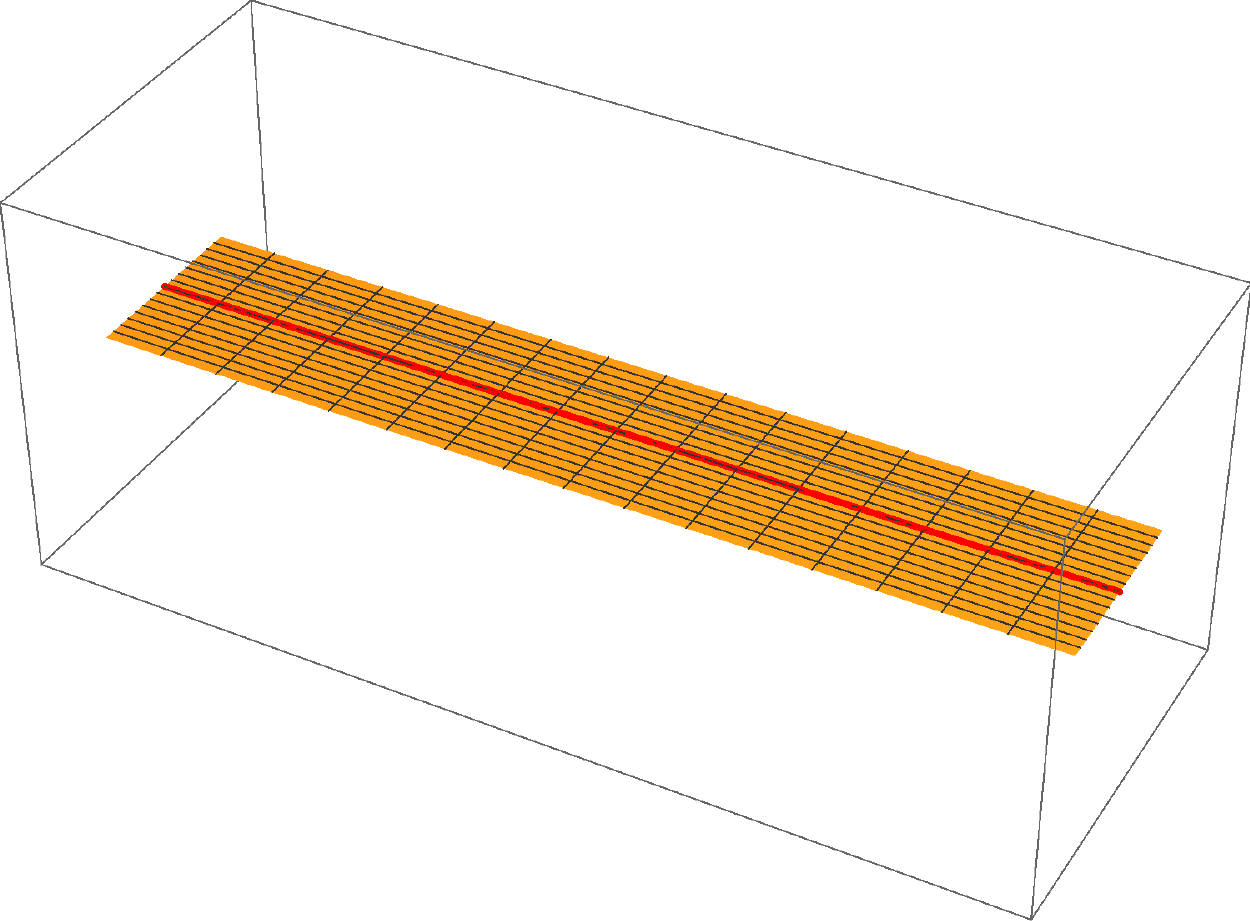}
\end{center}
\caption{Minimal energy configurations for the cases $\theta=0$
and $\theta=\pi/2$,
with a decreasing value of $|\alpha|=|d_1^\prime\cdot d_3|$
from left to right,
and with $\beta=d_2^\prime\cdot d_3=0$.
The mid-line is shown in red. 
}
\end{figure} 

\begin{figure}[htbp]
\label{elicoidi_T_theta_pi4}
\begin{center}
\includegraphics[width=6cm]
{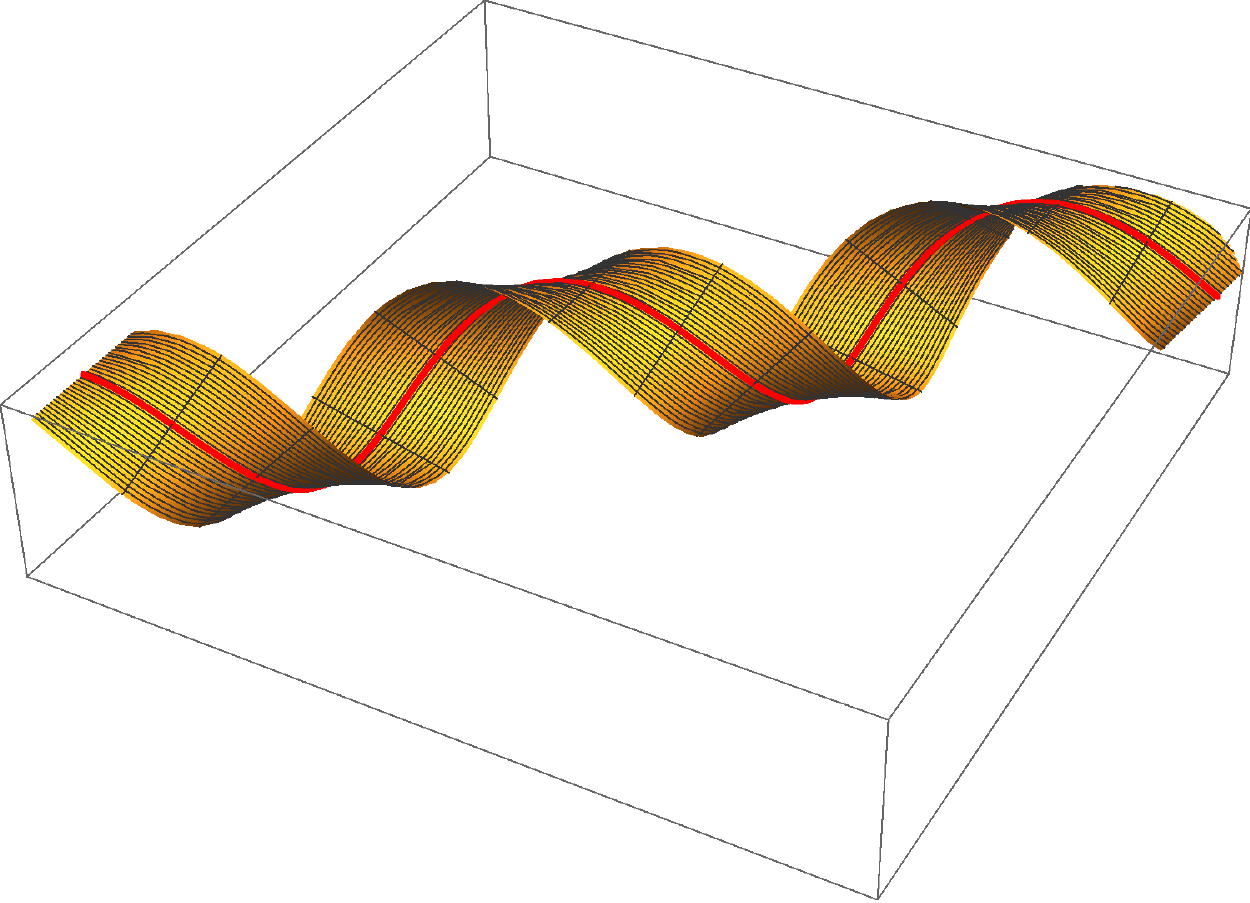}
\hspace{.6cm}
\includegraphics[width=5.5cm]
{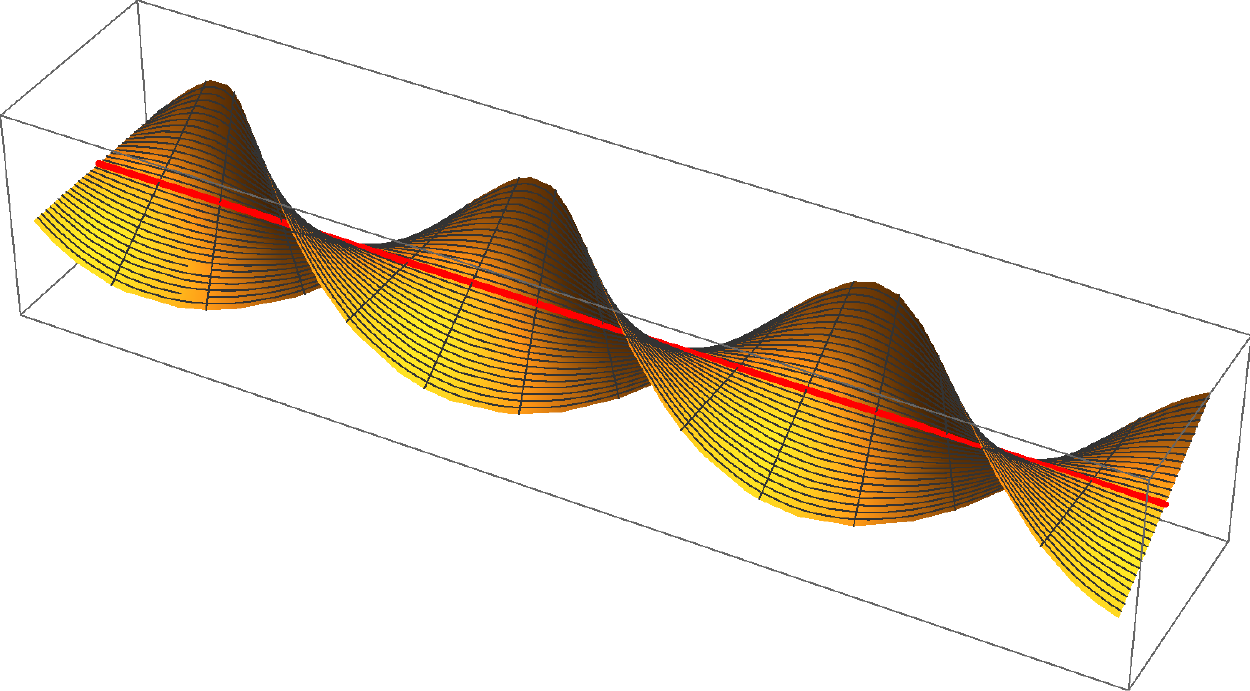}
\end{center}
\caption{
A selection of minimal energy configurations 
for the cases $\theta\in(0,\pi/2)\cup(\pi/2,\pi)$.
Both of them correspond to some
positive and constant $\beta=d_2^\prime\cdot d_3$
and some constant $\alpha=d_1^\prime\cdot d_3$.
In particular, the second configuration 
corresponds to $\alpha=0$ and its mid-line 
(shown in red) is a straight line.
In the other configuration the mid-line is a helix.
}
\end{figure} 

We note that the minimisers predicted by our limiting model
are in agreement with  
\cite[Figure 3]{Studart}, where minimum-energy
configurations of self-shaping synthetic systems with oriented reinforcement are shown.

\section{The splay-bend case}
\label{splay-bend}

In the splay-bend case, 
the $\theta$-dependent target curvature tensor
$\bar A^\theta$ defined in \eqref{A_theta} is 
\begin{equation*}
\bar A_S^\theta 
=
\kk \left( \begin{array}{cc} -\cos^2\theta & \sin\theta\cos\theta \\ \sin\theta\cos\theta & -\sin^2\theta \end{array}\right) 
\end{equation*}
and $\det \bar A_S^\theta =\det \bar A_S= 0$.
The functional defined in \eqref{eq:bendingenergy1} becomes
\begin{equation*}
\mathscr E_{\ep,S}^\theta(v)
=
\frac1{\ep}\int_{S_\ep}
\Big\{c |A_v(z)|^2 + L_S^\theta(A_v(z))\Big\}{\rm d}z,
\end{equation*}
with
\[
L_S^\theta(A_v) 
\,:=\,  
- 2c_1 A_v\cdot\bar{A}^{\theta,SB} + 2c_2\kk\tr A_v
+ c \kk^2 + \bar e_S.
\]
Also, the rescaled energy \eqref{our_fun} is now
\begin{equation*}
\mathscr J_{\ep,S}^{\theta}(y) 
:= 
\int_S \Big\{c |A_{y,\ep}(x)|^2 + L_S^\theta(A_{y,\ep}(x))\Big\}\dx,
\end{equation*}
for every $y\in\W^{2,2}_{{\rm iso},\ep}(S,\RR^3)$,
and
$\mathscr J_S^{\theta}:\mathcal{A}\to \RR$ is defined as
\begin{equation}
\label{funct_lim_S}
\mathscr J_S^{\theta}(d_1,d_2,d_3)
\,:=\,
\int_I 
\overline Q_S^{\,\theta}
(d_1^\prime\cdot d_3,d_2^\prime\cdot d_3)\dx_1,
\end{equation}
where $\overline Q_S^{\,\theta}$
is given by \eqref{Qasmin}, 
with $L_S^\theta$ in place of $L^\theta$.
The counterpart of Corollary~\ref{cor:twist}
holds for the splay-bend case: it is sufficient to replace
$\mathscr J_{\ep,T}^{\theta}$ and $\mathscr J_T^{\theta}$
by 
$\mathscr J_{\ep,S}^{\theta}$ and $\mathscr J_S^{\theta}$,
respectively, in the statement
of Corollary \ref{cor:twist}.

Note that $\bar A_S^\theta$ can be alternatively written as
\[
\bar A_S^\theta = \frac12 \left(\bar A_T^\theta - \mathbb{I}\right)
\]
and in turn
\[
L_S^\theta(A) 
= -c_1 A\cdot \bar A_T^\theta + k(c+c_2)\tr A + ck^2 + \bar e_S.
\]
Hence, setting $d_\theta = c_1a_\theta -c -c_2$,
we have that
\[
\overline Q_S^{\,\theta}(\alpha,\beta) 
=\min_{\gamma\in\RR} f(\gamma),
\]
where
\begin{align*}
f(\gamma) 
&:= 
c(\alpha^2 + 2\beta^2 +\gamma^2) + 2c|\alpha\gamma - \beta^2| 
-k d_\theta(\alpha+\gamma) + 2kc_1(a_\theta \alpha - b_\theta\beta) 
 + c\kk^2 + \bar e_S,
\end{align*}
recalling that
$a_\theta:=\cos2\theta$ and $b_\theta:=\sin2\theta$.
This will be useful in the proof of the following
proposition.

\begin{proposition}
\label{prop:QS}
$\overline Q_S^{\,\theta}$ is a continuous function given by
\[
\overline Q_S^{\,\theta}(\alpha,\beta)= 
\left\{\begin{array}{lc}
2c_1\kk\big(a_\theta\alpha - b_\theta\beta\big)
+c\kk^2\Big(1-\frac{d_\theta^2}{4c^2}\Big) + \bar e_S, 
& 
\mbox{in } \mathcal D_S\\

 4c\beta^2 - 2kd_\theta\alpha
 +2c_1\kk\big(a_\theta\alpha - b_\theta\beta\big)
+c\kk^2\Big(1-\frac{d_\theta^2}{4c^2}\Big) + \bar e_S,  
&
\mbox{in } \mathcal U_S\\

c\frac{(\alpha^2+\beta^2)^2}{\alpha^2} - 
kd_\theta\frac{\alpha^2+\beta^2}{\alpha}
+2c_1\kk\big(a_\theta\alpha - b_\theta\beta\big)
+c\kk^2+\bar e_S,\quad  
&
\mbox{in } \mathcal V_S.
 \end{array}\right.
\]
where $d_\theta := c_1a_\theta - c - c_2$, and
\begin{align*}
\mathcal D_S
&:=\left\{(\alpha,\beta)\in\RR^2:\frac{\kk}{2c}
d_\theta\,\alpha > \beta^2+\alpha^2\right\},\\
\mathcal U_S
&:=\left\{(\alpha,\beta)\in\RR^2:\frac{\kk}{2c} 
d_\theta\,\alpha \leq \beta^2-\alpha^2\right\},\\
\mathcal V_S
&:=\RR^2\setminus(\mathcal D_S\cup\mathcal U_S).
\end{align*}
\end{proposition}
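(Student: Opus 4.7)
The plan is to follow verbatim the strategy used in the proof of Proposition \ref{prop_T}, exploiting the structural similarity already advertised in the paragraph immediately preceding the statement (where $L_S^\theta$ has been rewritten so that the minimisation over $\gamma$ takes exactly the same form as in the twist case, but with $-\kk d_\theta$ playing the role of $2c_1\kk a_\theta$). Concretely, one has to minimise over $\gamma\in\RR$ the function
\[
f(\gamma)=c(\alpha^2+2\beta^2+\gamma^2)+2c|\alpha\gamma-\beta^2|-\kk d_\theta(\alpha+\gamma)+2\kk c_1(a_\theta\alpha-b_\theta\beta)+c\kk^2+\bar e_S,
\]
whose only non-differentiable point is $\gamma=\beta^2/\alpha$ (when $\alpha\neq 0$).

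First I would dispose of the trivial case $\alpha=0$: then $f$ is a smooth convex parabola in $\gamma$, minimised at $\gamma=\kk d_\theta/(2c)$, and the resulting value of $\overline Q_S^{\,\theta}(0,\beta)$ agrees with the $\mathcal U_S$ branch of the claimed formula (which is consistent with the fact that $(0,\beta)\in\mathcal U_S$ whenever $\beta\neq 0$, and lies on the closure for $\beta=0$). For $\alpha\neq 0$ I would compute
\[
f^\prime(\gamma)/2=c\gamma+c\alpha\,\sgn(\alpha\gamma-\beta^2)-\tfrac12\kk d_\theta
\]
on each open half-line, giving the two candidate critical points
\[
\gamma_1=\tfrac{\kk d_\theta}{2c}-\alpha\quad\text{(valid in $\{\alpha\gamma>\beta^2\}$),}\qquad \gamma_2=\tfrac{\kk d_\theta}{2c}+\alpha\quad\text{(valid in $\{\alpha\gamma<\beta^2\}$).}
\]
A direct check shows these admissibility conditions translate exactly to $\frac{\kk}{2c}d_\theta\alpha>\beta^2+\alpha^2$ and $\frac{\kk}{2c}d_\theta\alpha<\beta^2-\alpha^2$, i.e.\ to the interiors of $\mathcal D_S$ and $\mathcal U_S$. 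In the intermediate regime $\mathcal V_S$, the sign analysis of $f^\prime$ on either side of $\beta^2/\alpha$ yields $f^\prime<0$ for $\gamma<\beta^2/\alpha$ and $f^\prime>0$ for $\gamma>\beta^2/\alpha$, so the minimum is attained at the non-smooth point $\gamma=\beta^2/\alpha$.

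The algebraic core is then evaluating $f$ at each of the three minimisers and showing the result matches the claimed three-branch formula. The useful observation, which should reproduce the "perfect square" phenomenon of the twist proof, is that
\[
\alpha+\gamma_1=\tfrac{\kk d_\theta}{2c},\qquad \gamma_2-\alpha=\tfrac{\kk d_\theta}{2c},
\]
so that in the computation of $f(\gamma_1)$ the combination $c\gamma_1^2+2c(\alpha\gamma_1-\beta^2)-\kk d_\theta\gamma_1$ collapses to $-c\alpha^2-2c\beta^2-\kk^2 d_\theta^2/(4c)+c\alpha^2$ (and analogously for $\gamma_2$ with the opposite sign in front of the absolute value), while $f(\beta^2/\alpha)$ is read off directly from the identity $\alpha^2+2\beta^2+\beta^4/\alpha^2=(\alpha^2+\beta^2)^2/\alpha^2$. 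These computations give precisely the three expressions in the statement.

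Finally, to ensure the three pieces actually define the pointwise minimum (and hence that $\overline Q_S^{\,\theta}$ is continuous across $\partial\mathcal D_S$ and $\partial\mathcal U_S$), I would compute the differences $f(\gamma_i)-f(\beta^2/\alpha)$ for $i=1,2$. Exactly as in the twist case, these simplify to
\[
f(\gamma_1)-f(\beta^2/\alpha)=-c\!\left[\tfrac{\alpha^2+\beta^2}{\alpha}-\tfrac{\kk d_\theta}{2c}\right]^{2},\qquad f(\gamma_2)-f(\beta^2/\alpha)=-c\!\left[\tfrac{\alpha^2-\beta^2}{\alpha}+\tfrac{\kk d_\theta}{2c}\right]^{2},
\]
each $\leq 0$, with equality exactly on the respective parabolic boundaries $\frac{\kk}{2c}d_\theta\alpha=\beta^2+\alpha^2$ and $\frac{\kk}{2c}d_\theta\alpha=\beta^2-\alpha^2$. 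This both certifies that in $\mathcal D_S$ one must pick $\gamma_1$, in $\mathcal U_S$ one must pick $\gamma_2$, and in $\mathcal V_S$ one must pick $\beta^2/\alpha$, and simultaneously yields continuity of $\overline Q_S^{\,\theta}$ across the region boundaries. The main (only) obstacle is purely bookkeeping: keeping track of the new linear term $-\kk d_\theta(\alpha+\gamma)$ compared with the twist case so that the two perfect squares at the end come out cleanly; once they do, the proof is complete.
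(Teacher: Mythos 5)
Your proposal is correct and follows exactly the same strategy as the paper's own proof (dispose of $\alpha=0$, identify $\gamma_1$, $\gamma_2$, $\beta^2/\alpha$ as the candidate minimisers in the three regimes, compare via the two negative perfect squares $f(\gamma_i)-f(\beta^2/\alpha)$). One minor bookkeeping slip: you claim that $c\gamma_1^2+2c(\alpha\gamma_1-\beta^2)-\kk d_\theta\gamma_1$ collapses to $-2c\beta^2-\kk^2 d_\theta^2/(4c)$, but in fact it equals $-c\alpha^2-2c\beta^2-\kk^2 d_\theta^2/(4c)+\alpha\kk d_\theta$; the clean collapse occurs only after also absorbing the $c\alpha^2+2c\beta^2$ and $-\kk d_\theta\alpha$ terms from $f$, at which point $f(\gamma_1)=c(\alpha+\gamma_1)^2-\kk d_\theta(\alpha+\gamma_1)+2\kk c_1(a_\theta\alpha-b_\theta\beta)+c\kk^2+\bar e_S=-\kk^2 d_\theta^2/(4c)+2\kk c_1(a_\theta\alpha-b_\theta\beta)+c\kk^2+\bar e_S$, matching the stated $\mathcal D_S$ branch. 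This does not affect the validity of the argument.
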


\begin{figure}[htbp]
\label{figure_T}
\begin{center}
\includegraphics[width=4.4cm]{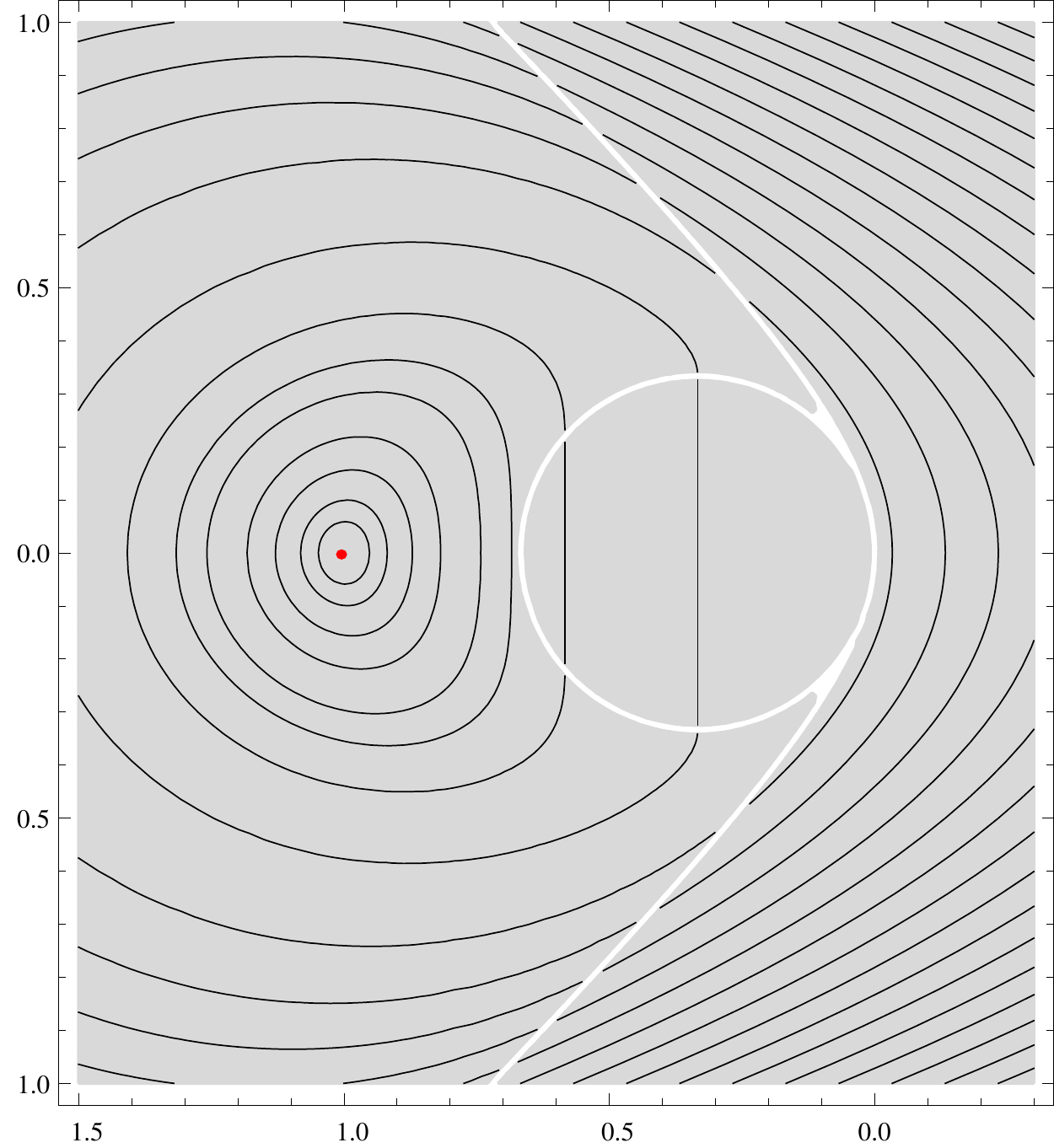}
\hspace{1cm}
\includegraphics[width=4.4cm]{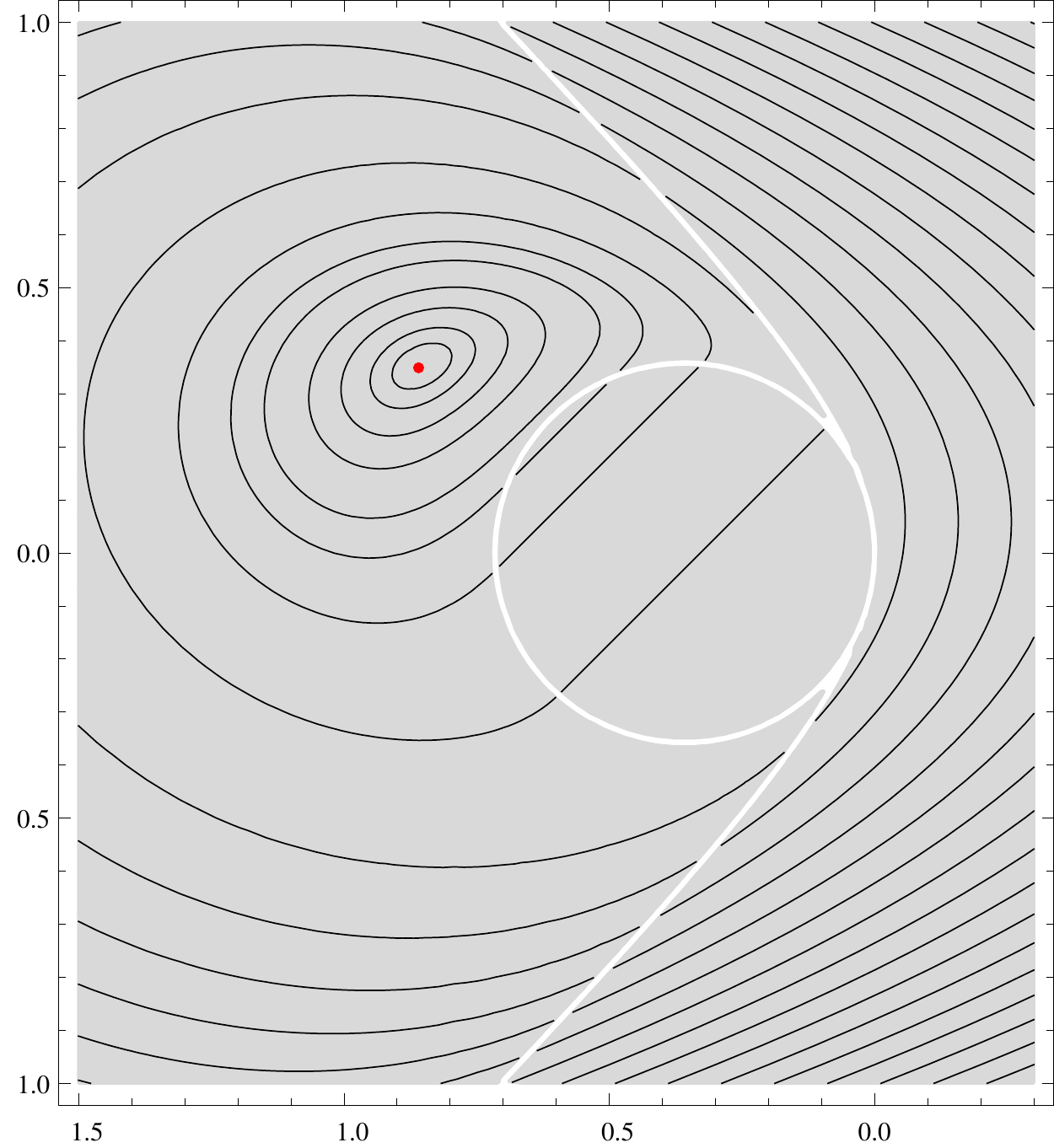}\\
\includegraphics[width=4.4cm]{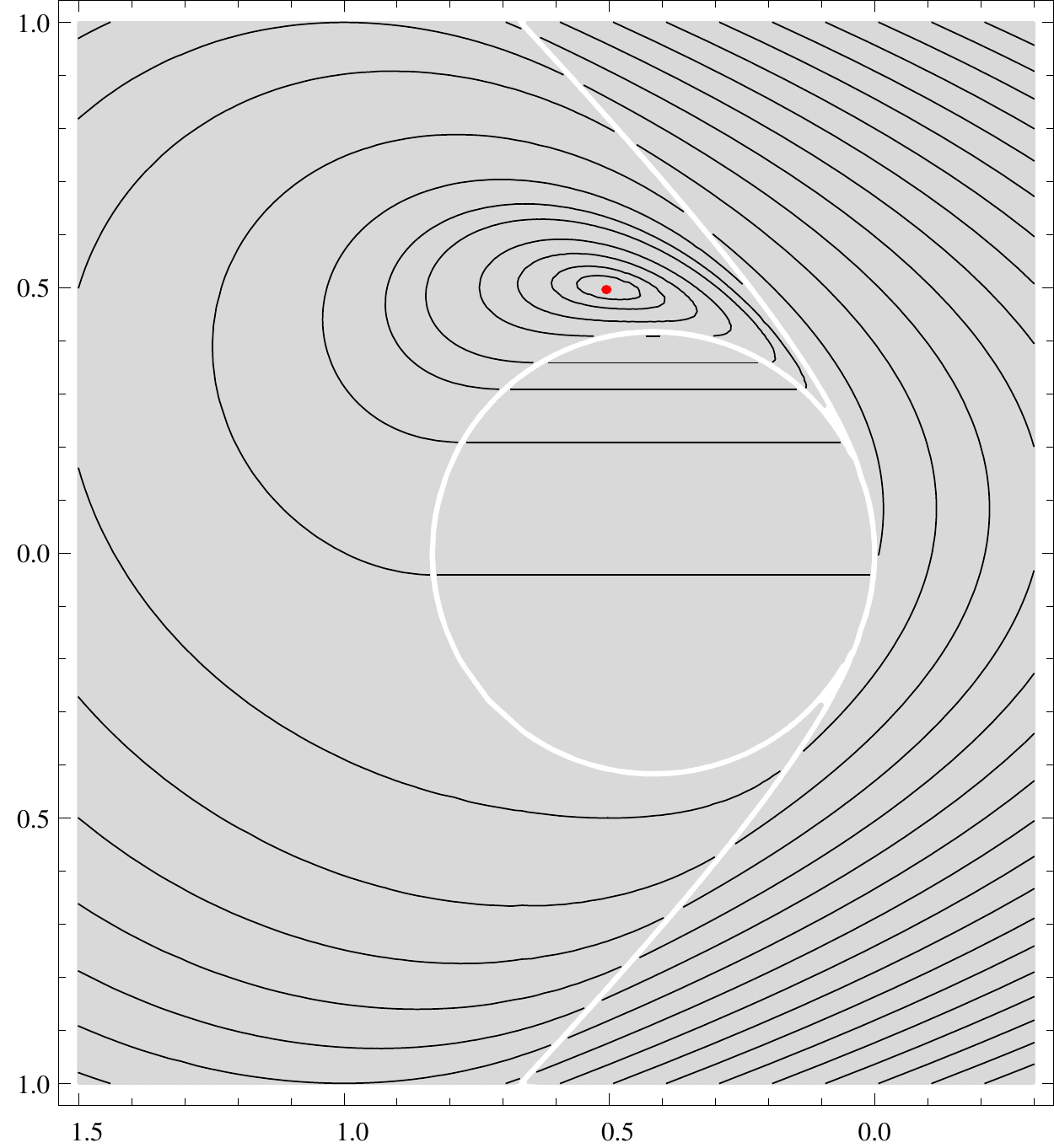}
\hspace{1cm}
\includegraphics[width=5.2cm]{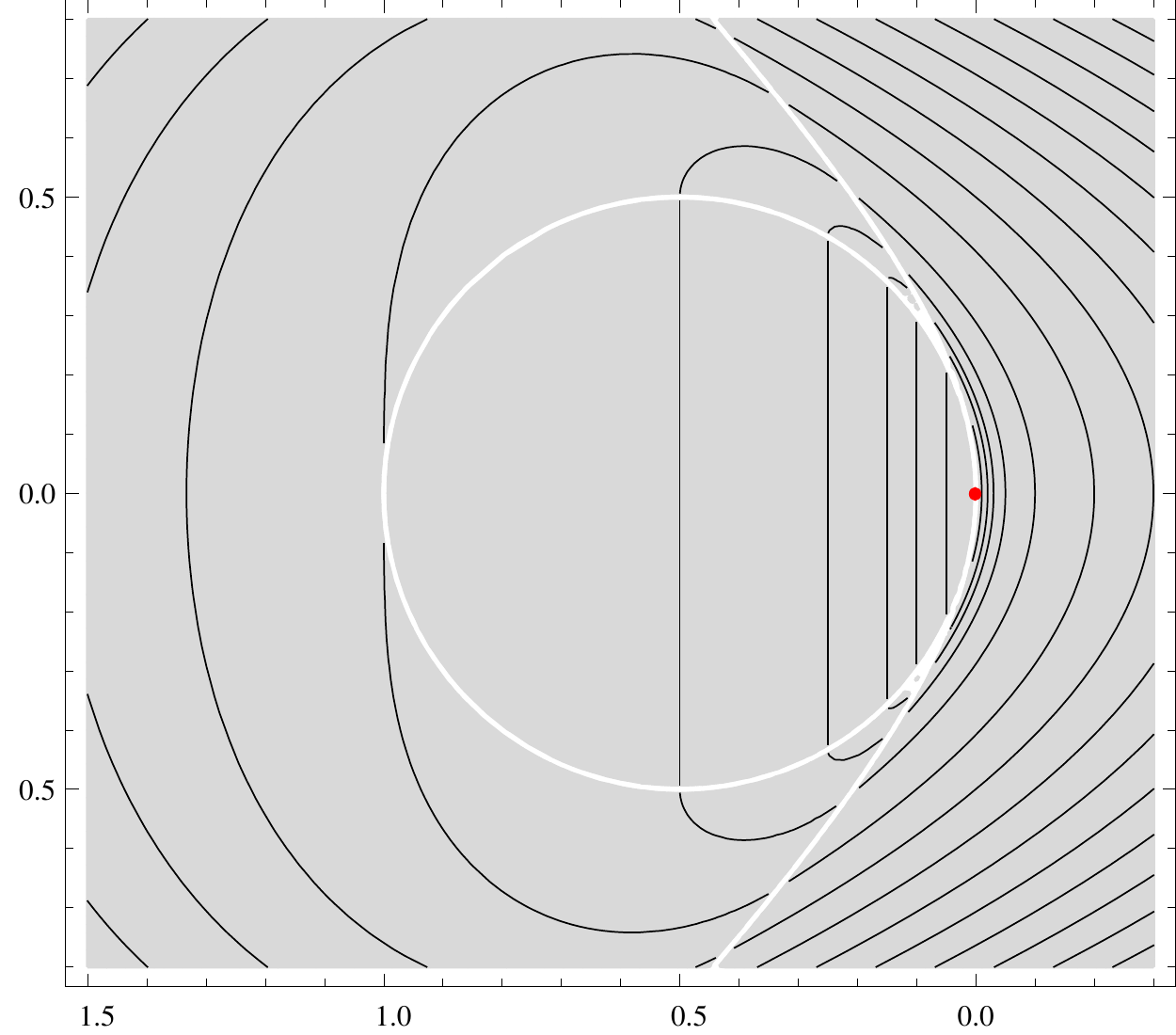}
\end{center}
\caption{Phase diagrams with level curves of 
$\overline Q_S^{\,\theta}$,
where the white lines emphasize the boundary of $\mathcal V_S$
and the red dots are the minimisers.
The pictures from top left to bottom right
correspond to the cases 
$\theta=0$, $\theta=\pi/8$, $\theta=\pi/4$, and $\theta=\pi/2$,
respectively.} 
\end{figure}

\begin{remark}
\label{remarkQ_S}
Setting 
\[
\overline Q_{S,1}^{\,\theta}
\,:=\,
2c_1\kk\big(a_\theta\alpha - b_\theta\beta\big)
+c\kk^2\Big(1-\frac{d_\theta^2}{4c^2}\Big) + \bar e_S,
\]
we have that $\overline Q_S^{\,\theta}=\overline Q_{S,1}^{\,\theta}$
in $\mathcal D_S$, that
\[
\overline Q_S^{\,\theta}
\,=\,
\overline Q_{S,2}^{\,\theta}(\alpha,\beta)
\,:=\,
4c\beta^2 - 2kd_\theta\alpha
+\overline Q_{S,1}^{\,\theta}(\alpha,\beta) 
\qquad
\mbox{in}\quad\mathcal U_S,
\]
and that
\[
\overline Q_S^{\,\theta}
\,=\,
c\Big(\frac{\alpha^2+\beta^2}{\alpha} - 
\frac{kd_\theta}{2c}\Big)^2 
\!\!+ \overline Q_{S,1}^{\,\theta}(\alpha,\beta) 
\,=\,  c\Big(\frac{\beta^2 - \alpha^2}{\alpha} - 
\frac{kd_\theta}{2c}\Big)^2 + 
\overline Q_{S,2}^{\,\theta}(\alpha,\beta)
\quad
\mbox{in}\quad\mathcal V_S.
\]
Since the squares in this expression vanishes on $\partial\mathcal D_S$
and $\partial\mathcal U_S$, respectively, this shows
in particular that $\overline Q_S^{\,\theta}$ is continuous.
\end{remark}

\begin{proof}[Proof of Proposition \ref{prop:QS}]
The proof is almost identical to that for $\overline Q_T^{\,\theta}$. 
For $\alpha = 0$, $f(\gamma)$ reduces to the
differentiable function
\[
f(\gamma) = 4c\beta^2 +c\gamma^2 
-2k d_\theta\gamma - 2c_1 \kk b_\theta\beta 
+ c\kk^2 + \bar e_S,
\]
which is minimised at $\gamma = \frac{\kk}{2c}d_\theta$ and hence, for all $\beta\in\RR$,
\[
\overline Q_S^{\,\theta}(0,\beta) = 
 4c\beta^2 - 2c_1\kk b_\theta\beta
+c\kk^2 -\frac{\kk^2 d_\theta^2}{4c} +\bar e_S.
\]
If $\alpha\neq 0$ and $\gamma = \beta^2/\alpha$,
\[
f(\beta^2/\alpha) = c\frac{(\alpha^2+\beta^2)^2}{\alpha^2} - 
kd_\theta\frac{\alpha^2+\beta^2}{\alpha}
+2c_1\kk\big(a_\theta\alpha - b_\theta\beta\big)
+c\kk^2+\bar e_S
\]
whereas, for $\gamma\neq\beta^2/\alpha$, $f$ is a differentiable function with
\[
f^\prime(\gamma) = 2c\gamma + 2c\alpha\sgn(\alpha\gamma-\beta^2) 
- kd_\theta.
\]
The critical points are then given by
\[
\gamma_1 = \frac{\kk}{2c}d_\theta - \alpha,\mbox{ in the regime }\frac{\kk}{2c}d_\theta\alpha > \alpha^2+\beta^2
\] 
and by
\[
\gamma_2 = \frac{\kk}{2c}d_\theta + \alpha,\mbox{ in the regime }\frac{\kk}{2c}d_\theta\alpha < \beta^2 - \alpha^2.
\]
The respective values of $f$ are
\[
f(\gamma_1) = 2c_1\kk(a_\theta\alpha - b_\theta\beta)
 +c\kk^2 -\frac{\kk^2 d_\theta^2}{4c} +\bar e_S
\]
and
\[
f(\gamma_2) =  4c\beta^2 - 2\kk d_\theta\alpha + 2c_1\kk\left(a_\theta\alpha - b_\theta\beta\right) 
 +c\kk^2 -\frac{\kk^2 d_\theta^2}{4c} +\bar e_S.
\]
It is easy to compute that
\begin{align*}
f(\gamma_1) - f(\beta^2/\alpha) & = -\frac{1}{4c}\kk^2 d_\theta^2
 - c\frac{(\alpha^2+\beta^2)^2}{\alpha^2} 
 +\kk d_\theta\frac{\alpha^2 + \beta^2}{\alpha}\\
& = -c\left[\frac{\alpha^2 + \beta^2}{\alpha} - \frac{\kk}{2c}d_\theta\right]^2\leq 0
\end{align*}
with equality if and only if $\frac{\kk}{2c}d_\theta\alpha = \alpha^2 + \beta^2$, i.e. in the regime $\mathcal D_S$,
\[
\overline Q_S^{\,\theta}(\alpha,\beta) = f(\gamma_1).
\]
Similarly, we find that
\begin{align*}
f(\gamma_2) - f(\beta^2/\alpha) & =  
-\frac{1}{4c}\kk^2 d_\theta^2 + 4c\beta^2
-2\kk d_\theta\alpha - c\frac{(\alpha^2+\beta^2)^2}{\alpha^2} 
+\kk d_\theta \frac{\alpha^2+\beta^2}{\alpha} \\
& = -\frac{1}{4c}\kk^2 d_\theta^2 
- c\frac{(\alpha^2 - \beta^2)^2}{\alpha^2}
+ \kk d_\theta \frac{\beta^2 - \alpha^2}{\alpha}\\
& = -c\left[\frac{\beta^2 - \alpha^2}{\alpha} - \frac{\kk}{2c}d_\theta\right]^2\leq 0
\end{align*}
with equality if and only if $\frac{\kk}{2c}d_\theta\alpha = \beta^2 - \alpha^2$, i.e. in the regime $\frac{\kk}{2c}d_\theta < \beta^2 - \alpha^2$,
\[
\overline Q_S^{\,\theta}(\alpha,\beta) = f(\gamma_2).
\]
At the same time, the calculations above also establish the Remark following Proposition \ref{prop:QS}.
To conclude the proof, a straightforward computation shows that 
$f^\prime(\gamma)<0$ if $\gamma<\beta^2/\alpha$ and $f^\prime(\gamma)>0$ if $\gamma>\beta^2/\alpha$. Hence, in the regime $\alpha\neq 0$ and
\[
\beta^2 - \alpha^2 \leq \frac{\kk}{2c}d_\theta\alpha \leq \alpha^2 + \beta^2
\]
the minimum value of $f$ is achieved at $\gamma=\beta^2/\alpha$ and
\[
\overline Q_S^{\,\theta}(\alpha,\beta) = f(\beta^2/\alpha).
\] 
\end{proof}

We now focus on the minimisers of $\overline Q_S^{\,\theta}$.
As for the twist case, when $\kk=0$ and up to
additive and multiplicative constants, 
the function $\overline Q_S^{\,\theta}$ reduces to
the function defined in \eqref{bar_Q}, 
which is minimised at $(0,0)$. 
When instead $\kk>0$,  differently from the twist case we have that
for every $\theta$ the minimiser of $\overline Q_S^{\,\theta}$
is a ($\theta$-dependent) single point,
in view of the following lemma.

\begin{lemma}
\label{minimi_S}
For every $0\leq\theta<\pi$,
$\overline Q_S^{\,\theta}$ is minimised precisely at
$(\alpha_{\theta}^S,\beta_\theta^S)$,
where
\begin{equation*}
\alpha_{\theta}^S
\,:=\,
-\frac k2\big(1+\cos2\theta\big),
\qquad\qquad
\beta_\theta^S
\,:=\,
\frac k2\sin2\theta.
\end{equation*}
Moreover,
\begin{equation}
\label{min_Q_S}
\min_{\RR^2}\overline Q_S^{\,\theta}
\,=\,
\bar e_S.
\end{equation}
\end{lemma}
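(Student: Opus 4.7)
The plan is to exploit the distinguishing structural feature of the splay-bend case: the target curvature satisfies $\det \bar A_S^\theta = 0$, so $\bar A_S^\theta$ itself is an admissible choice of $M$ in the minimization \eqref{Qasmin}. This means the minimum of $\overline Q_S^{\,\theta}$ should be attainable without any residual energy gap, in sharp contrast with the twist case where $\det\bar A_T^\theta=-\kk^2\neq 0$.

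The key step is a short algebraic rewriting. Using the identity $\tr^2 M=|M|^2+2\det M$ for symmetric $2\times 2$ matrices together with the orthogonal invariances $\tr\bar A_S^\theta=-\kk$ and $|\bar A_S^\theta|^2=\kk^2$, and the explicit form of $L_S^\theta$, one can verify the identity
\begin{equation*}
c|M|^2+2c|\det M|+L_S^\theta(M)=c_1|M-\bar A_S^\theta|^2+c_2(\tr M+\kk)^2+2c_1|\det M|+2c_2\bigl(|\det M|-\det M\bigr)+\bar e_S.
\end{equation*}
Each of the four bracketed terms on the right is nonnegative since $c_1,c_2>0$ and $|\det M|\geq\det M$. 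Minimizing over the free variable $\gamma$ and then over $(\alpha,\beta)$ gives $\overline Q_S^{\,\theta}(\alpha,\beta)\geq\bar e_S$ throughout $\RR^2$.

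Equality in the first term forces $M=\bar A_S^\theta$; this choice then automatically zeroes out the remaining three terms (the second by $\tr\bar A_S^\theta=-\kk$, and the last two by $\det\bar A_S^\theta=0$). Reading off the entries of $\bar A_S^\theta$ gives $\alpha=-\tfrac{\kk}{2}(1+a_\theta)=\alpha_\theta^S$, $\beta=\tfrac{\kk}{2}b_\theta=\beta_\theta^S$, with optimal $\gamma=\tfrac{\kk}{2}(a_\theta-1)$. Hence $(\alpha_\theta^S,\beta_\theta^S)$ is the unique minimizer of $\overline Q_S^{\,\theta}$ and the minimum value is exactly $\bar e_S$, establishing both claims of the lemma.

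The only real obstacle is producing the decomposition, which is a routine rearrangement; once it is in hand, no case analysis through the regions $\mathcal D_S$, $\mathcal U_S$, $\mathcal V_S$ of Proposition \ref{prop:QS} is needed. This also pinpoints why the splay-bend minimizer is an isolated point rather than a segment as in Lemma \ref{minimi_T}: the failure of $\det\bar A_T^\theta=0$ obstructs precisely this clean argument and is responsible for the extra degeneracy one observes in the twist setting.
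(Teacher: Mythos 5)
Your proof is correct, and it takes a genuinely different — and in fact cleaner — route than the paper.

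The paper first establishes the explicit piecewise formula for $\overline Q_S^{\,\theta}$ over the three regions $\mathcal D_S,\mathcal U_S,\mathcal V_S$ (Proposition \ref{prop:QS}), then searches for critical points region by region, solving the gradient system \eqref{eq:proof_Q_S_1}--\eqref{eq:proof_Q_S_2} in the interior of $\mathcal V_S$ and splitting the analysis into the cases $\theta\in\{0,\pi/2\}$ versus $\theta\notin\{0,\pi/2\}$ (with $b_\theta=0$ versus $b_\theta\neq 0$). Several of its steps are deferred to ``a straightforward computation'' or ``other computations show,'' including the verification that the critical point found is actually the global minimiser.

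You instead bypass Proposition \ref{prop:QS} altogether and work directly with the quantity inside the minimum in \eqref{Qasmin}. Your algebraic identity checks out: expanding the right-hand side, using $c=c_1+c_2$, $\tr^2M=|M|^2+2\det M$, $\tr\bar A_S^\theta=-\kk$, $|\bar A_S^\theta|^2=\kk^2$, the two copies of $2c_2\det M$ cancel and you recover $c|M|^2+2c|\det M|+L_S^\theta(M)$ exactly. Since $c_1,c_2>0$ and $|\det M|\ge\det M$, all four non-constant terms are nonnegative, giving the uniform lower bound $\bar e_S$. For the uniqueness, the term $c_1|M-\bar A_S^\theta|^2$ already contains the $\gamma$-independent contributions $c_1(\alpha-(\bar A_S^\theta)_{11})^2+2c_1(\beta-(\bar A_S^\theta)_{12})^2$, so $\inf_\gamma$ of the whole expression strictly exceeds $\bar e_S$ unless $(\alpha,\beta)$ matches the first row of $\bar A_S^\theta$, i.e.\ $(\alpha,\beta)=(\alpha_\theta^S,\beta_\theta^S)$; and at that point $\gamma=(\bar A_S^\theta)_{22}=\tfrac{\kk}{2}(a_\theta-1)$ kills all remaining terms since $\det\bar A_S^\theta=0$ and $\tr\bar A_S^\theta=-\kk$. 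Both claims of the lemma follow at once, with no case distinction on $\theta$ and no appeal to the phase diagram. The trade-off is that the paper's piecewise formula in Proposition \ref{prop:QS} is useful elsewhere (e.g.\ for the phase-diagram plots), while your argument pinpoints the structural reason the splay-bend minimiser is a single point and collapses precisely when $\det\bar A^\theta\neq 0$, which is the twist case.
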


\begin{proof}
Consider the nontrivial case $\kk\neq0$.
A straightforward computation shows that there are no
critical points of $\overline Q^{\,\theta}_S$
in the interior of $\mathcal D_S$ or $\mathcal U_S$,
for any value of $\theta$. Next, we look for critical points 
in the interior of
$\mathcal V_S$. 
Differentiating the first of the two expressions for 
$\overline Q_S^{\,\theta}$ in $\mathcal V_S$ 
given in Remark \ref{remarkQ_S} 
and setting $\nabla\overline Q_S^{\,\theta}=0$ yields
\begin{align}\label{eq:proof_Q_S_1}
2c\Big(\frac{\alpha^2 + \beta^2}{\alpha} - \frac{k d_\theta}{2c}\Big)\frac{\beta^2 - \alpha^2}{\alpha^2} 
&\, =\, 
2c_1k a_\theta \\
\,&\,\nonumber\\
2c\Big(\frac{\alpha^2 + \beta^2}{\alpha} - \frac{k d_\theta}{2c}\Big)\frac{\beta}{\alpha} 
& \,=\,
 c_1k b_\theta.
\label{eq:proof_Q_S_2}
\end{align}
We first examine the cases $\theta = 0$ and $\theta = \pi/2$.
In these cases we have $b_\theta = 0$ and $|a_\theta|=1$,
so that from \eqref{eq:proof_Q_S_2} we get $\beta=0$,
since $(\alpha^2 + \beta^2)/\alpha\neq(k d_\theta/(2c)$
in the interior of $\mathcal V_S$.
Setting $\beta=0$ in \eqref{eq:proof_Q_S_1} we then get 
$
\alpha = -\frac{k}{2c}(c_1a_\theta + c + c_2).
$ 
Now, an easy calculation shows that 
in the case $\theta = 0$ the point 
$(\alpha_0^S,\beta_0^S)$ lies in the interior of $\mathcal V_S$
and it is the global minimiser of $\overline{Q}_S^{\,\theta}$.
On the other hand, in the case $\theta=\pi/2$
another easy computation show that the
the point $(-kc_2/c , 0)$ lies in $\mathcal D_S$ and it 
is thus not a critical point of our function in the interior of 
$\mathcal V_S$. 
Then, comparing the values of $\overline Q_S^{\,\pi/2}$ 
on the boundaries of $\mathcal D_S$ and $\mathcal U_S$, 
we find that the minimum is achieved at 
$(\alpha_{\pi/2}^S,\beta_{\pi/2}^S) = (0,0)$.
Moreover, it can be readily checked that  
\eqref{min_Q_S} holds for $\theta=0$ and $\theta=\pi/2$.

Consider now an arbitrary
$\theta\in(0,\pi)\setminus\{\pi/2\}$
and note that in this case $b_\theta\neq 0$ and $|a_\theta| < 1$.
As before, we search for critical points of 
$\overline Q_S^{\,\theta}$ in the interior
of $\mathcal V_S$. 
From \eqref{eq:proof_Q_S_2} we get in particular $\beta\neq 0$.
Therefore, we may divide \eqref{eq:proof_Q_S_1} by 
\eqref{eq:proof_Q_S_2} getting
\begin{equation}
\label{eq:proof_Q_S_3}
\frac{\beta^2 - \alpha^2}{\alpha} = 2\frac{a_\theta}{b_\theta}\beta,
\end{equation}
and in turn that
$
|\alpha| = |a_\theta\alpha - b_\theta\beta|.
$
Hence, either $\alpha = a_\theta \alpha - b_\theta\beta$ 
or $\alpha =  b_\theta\beta - a_\theta \alpha$.
Suppose that the former case holds true or, equivalently, that
\begin{equation}\label{eq:proof_Q_S_5}
\frac{\beta}{\alpha} = \frac{a_\theta - 1}{b_\theta}.
\end{equation}
Before proceeding, consider the second expression
for $\overline Q_S^{\,\theta}$ in $\mathcal V_S$ 
given in Remark \ref{remarkQ_S}, namely 
\[
\overline Q_S^{\,\theta}
\,=
c\Big(\frac{\beta^2 - \alpha^2}{\alpha} - \frac{\kk d_\theta}{2c}\Big) + \overline Q_{S,2}^{\,\theta}(\alpha,\beta).
\]
Differentiating it with respect to $\beta$ and setting
$\partial_\beta\overline Q_S^{\,\theta}=0$
yields
\[
2c\Big(\frac{\beta^2 - \alpha^2}{\alpha} - \frac{\kk d_\theta}{2c}\Big)\frac{\beta}{\alpha} = c_1\kk b_\theta - 4c\beta.
\]
This expression, coupled with \eqref{eq:proof_Q_S_3} 
and \eqref{eq:proof_Q_S_5}, easily gives 
\[
(\alpha,\beta)=
\frac\kk2
\left(\frac{b^2}{a_\theta-1},b_\theta\right).
\]
Recalling the definition of $a_\theta$ and $b_\theta$,
this point coincides with 
$(\alpha_\theta^S,\beta_\theta^S)$ defined in the statement,
and lies in the interior of $\mathcal V_S$. 
Supposing now $\alpha =  b_\theta\beta - a_\theta \alpha$
and proceeding similarly returns the point
\[
-\frac{\kk c_2}{2c}
\left(
\frac{b^2_\theta}{a_\theta + 1},
b_\theta
\right),
\]
which belongs to $\mathcal D_S$. 
Hence, the only critical point in the interior of $\mathcal V_S$ 
is $(\alpha_\theta^S,\beta_\theta^S)$.
Other computations show that this is indeed the global minimiser
of $\overline Q_S^{\,\theta}$
and that \eqref{min_Q_S} holds true. 
\end{proof}

In view of the above lemma, we have that the 
minimum of our limiting functional \eqref{funct_lim_S}
is
\begin{equation*}
\min_\mathcal A
\mathscr J_S^{\theta}
\,=\,
\ell\min_{\RR^{2\times2}}\overline Q_S^{\,\theta}
\,=\,
\ell\,\bar e_S
\,=\,
\mu\,\ell\,(1+\bm)
\left(\frac{\pi^4-12}{32}\right)
\frac{\eta_0^2}{h_0^2},
\end{equation*}
where in the second equality we have used \eqref{min_Q_S}
and in the last one the constant $\bar e_S$
has been replaced by its expression given 
in terms of the $3$D parameters
(the first expression in \eqref{bare}).
Also, the minimisers of $\mathscr J_T^{\theta}$ 
are all $(d_1,d_2,d_3)\in\mathcal A$
such that
\begin{align*}
(d_1^\prime\cdot d_3\,,\,d_2^\prime\cdot d_3)
&\,=\,
\frac\kk2
\Big(-1-\cos2\theta ,\sin2\theta 
\Big)\\
&\,=\,
\frac{3\,\eta_0}{\pi^2h_0}
\Big(-1-\cos2\theta,\sin2\theta 
\Big),
\qquad\quad
\theta\in[0,\pi).
\end{align*}
For the convenience of the reader, 
in what follows we enlist some cases:
\begin{align*}
&\theta=0:
\qquad\quad\ 
(d_1^\prime\cdot d_3,d_2^\prime\cdot d_3)
\,=\,
\frac{3\,\eta_0}{\pi^2\,h_0}(-1,0);\\
&\theta=\pi/8:
\qquad\,
(d_1^\prime\cdot d_3,d_2^\prime\cdot d_3)
\,=\,
\frac{3\,\eta_0}{\pi^2\,h_0}
\left(-\frac{\sqrt2+2}2,\frac{\sqrt2}2\right);\\
&\theta=\pi/4:
\qquad\,
(d_1^\prime\cdot d_3,d_2^\prime\cdot d_3)
\,=\,
\frac{3\,\eta_0}{\pi^2\,h_0}(-1,1);\\
&\theta=\pi/2:
\qquad\,
(d_1^\prime\cdot d_3,d_2^\prime\cdot d_3)
\,=\,
(0,0).
\end{align*}



\subsection*{Acknowledgements}
We gratefully acknowledge the support by the European Research Council through the ERC Advanced Grant 340685-MicroMotility.
This work was started after an inspiring lecture given by 
Prof. R. Paroni at ``Physics and Mathematics of Materials: current insights'', an international conference in honour of
the 75th birthday of Paolo Podio-Guidugli
held at Gran Sasso Science Istitute (L'Aquila) in January 2016.


\medskip
\noindent
The authors declare that they have no conflict of interest.


\bibliographystyle{plain}


\end{document}